\newcommand{\sgn}{\mbox{sgn}}
\newcommand{\SL}{\mathrm{SL}}
\newcommand{\N}{\mathbb N}
\newcommand{\C}{\mathbb C}
\newcommand{\Q}{\mathbb Q}
\theoremstyle{plain}
\newtheorem{thm}{Theorem}[section]
\newtheorem{cor}[thm]{Corollary}
\newtheorem{lem}[thm]{Lemma}
\newtheorem{prop}[thm]{Proposition}
\newtheorem{conj}[thm]{Conjecture}
\theoremstyle{definition}
\newtheorem{defn}[thm]{Definition}
\newtheorem{rem}{Remark}
\numberwithin{equation}{section}
\numberwithin{thm}{section}
\setlist[enumerate]{leftmargin=*,label=\rm{(\arabic*)}}
\renewcommand{\sgn}{\textnormal{sgn}}
\renewcommand{\sgn}{{\rm sgn}}
\newcommand{\R}{\mathbb R}
\newcommand{\Z}{\mathbb Z}
\setlist[itemize]{noitemsep, topsep=0pt}
\newcommand{\vast}{\bBigg@{2}}
\newcommand{\Vast}{\bBigg@{5}}
\renewcommand{\pmod}[1]{\ \left( \mathrm{mod} \, #1 \right)}
\newcommand{\lcm}{\operatorname{lcm}}
\title{Prime-detecting quasi-modular forms in higher level}
\author{Ben Kane}
\address{The University of Hong Kong, Department of Mathematics, Pokfulam, Hong Kong}
\email{bkane@hku.hk}
\author{Krishnarjun Krishnamoorthy}
	\address{Department of Mathematics, National Institute of Technology Tiruchirappalli, Trichy, Tamil Nadu, 620015, India}
        	\email[Krishnarjun Krishnamoorthy]{krishnarjunmaths@outlook.com}
\author{Yuk-Kam Lau}
\address{Weihai Institute for Interdisciplinary Research, Shandong University, China and Department of Mathematics, The University of Hong Kong,  Pokfulam, Hong Kong}
\email{yukkamlau@hku.hk}
\begin{document}

\date{\today}
\keywords{Quasi-modular forms, sign changes of Fourier coefficients}
\subjclass[2020]{11F11,11F30}
\begin{abstract}
	In a previous work, the authors resolved a conjecture about the structure of prime-detecting quasi-modular forms by studying sign changes occurring in quasi-modular cusp forms. In this paper, we extend the considerations to prime-detecting quasi-modular forms of higher level, in particular describing the structure of the space of quasi-modular forms that detect primes in various arithmetic progressions. We also provide an ``analytic'' proof of the level one case.
\end{abstract}
\maketitle

\section{Introduction}

    Recently Craig, van Ittersum and Ono \cite{CvIO}, showed that the set of primes is ``partition theoretic'', meaning that the set of primes can be described as the set of solutions of certain Diophantine equations involving partition functions. In fact they showed that there is an infinite family of such partition-theoretic identities that ``strongly detect'' primes. To describe one of the simplest such examples, given an $a\in \N$, we define the MacMahon partition function
    \begin{equation}\label{eqn:McMahonDef}
    M_a(n) := \underset{n = m_1s_1 + \ldots + m_as_a}{\sum_{0 < s_1 <\ldots < s_a}} m_1\ldots m_a.
    \end{equation}
    Then one of the results of \cite{CvIO} states that an integer $n$ is a prime if and only if
\begin{equation}\label{eqn:PrimeDetectingExample}
    (n^2-3n+2)M_1(n) = 8 M_2(n).
    \end{equation}
    As mentioned before, this is just one of an infinite family of such relations. Some more recent results are available in \cite{Craig, GomezAP, Kang}.

    \begin{defn}\label{defn:Detect}
        A sequence of numbers $a(n)$ is said to \textit{detect} a set $A\subseteq \N$ if $a(n)=0$ whenever $n\in A$. We say that $a(n)$ \textit{strongly detects} $A$ if in addition, $a(n) > 0$ whenever $n\notin A$.
    \end{defn}

    The existence of prime-detecting partition identities arose within the larger context of (mixed-weight) quasi-modular forms whose $n$-th Fourier coefficient detects (or strongly detects) primes.  In particular, define a subset $\Omega$ of the graded ring of (integer weight) quasi-modular forms (of full level) such that $f\in\Omega$ if and only if for ($q:=e^{2\pi i\tau}$)
    \[
    f(\tau)=\sum_{n\geqslant 0} c_f(n) q^n,
    \]
    we have $c_f(n)$ strongly detects the primes. Let $\mathcal{E}$ denote the space of \begin{it}quasi-modular Eisenstein series\end{it} (i.e., the vector space spanned by Eisenstein series and their derivatives). In \cite[Theorem 2.3]{CvIO} the authors classify $\mathcal{E}\cap\Omega$, and propose the following conjecture.
    \begin{conj}\label{Conj:CvIO}
        With notation as above, $\Omega \subset \mathcal{E}$.
    \end{conj}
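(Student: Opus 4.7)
The strategy is to give an analytic proof of the inclusion $\Omega \subseteq \mathcal{E}$, avoiding the sign-change techniques of the authors' previous work. Write any $f \in \Omega$ as $f = e + c$, where $e$ lies in the Eisenstein subspace $\mathcal{E}$ and $c$ is a complementary quasi-modular cusp form (using the natural decomposition of the space of quasi-modular forms into Eisenstein derivatives plus a cuspidal complement). The goal is to show $c = 0$, which will give $f = e \in \mathcal{E}$.

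Expanding $e$ in the basis $\{D^{j} E_{k-2j}\}$ of $\mathcal{E}$ and using $\sigma_{k-2j-1}(p) = 1 + p^{k-2j-1}$ for a prime $p$, one computes
\[
c_{e}(p) = P(p),
\]
where $P \in \Q[x]$ is an explicit polynomial whose coefficients are linear in those of $e$. The hypothesis $c_{f}(p) = 0$ for all primes then gives
\[
c_{c}(p) = -P(p) \qquad \text{for every prime } p.
\]
Invoking Deligne's bound $|c_{c}(n)| \ll_{\varepsilon} n^{(k^{\ast}-1)/2 + \varepsilon}$, where $k^{\ast}$ is the top weight appearing in $c$, immediately yields $\deg P \leq (k^{\ast}-1)/2$, so the ``top-weight'' Eisenstein contributions in the expansion of $e$ must vanish.

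The main analytic step is to deduce $c = 0$ from the residual identity $c_{c}(p) = -P(p)$. I would compare second moments over primes: Rankin--Selberg yields $\sum_{p \leq X} c_{c}(p)^{2} \sim \alpha X^{k^{\ast}} / \log X$ with $\alpha > 0$ if and only if $c \neq 0$, while the prime number theorem gives $\sum_{p \leq X} P(p)^{2} \sim \beta X^{2\deg P + 1} / \log X$. Matching leading and subleading terms, together with Sato--Tate/Chebotarev equidistribution of Hecke eigenvalues (which precludes polynomial-in-$p$ behavior for the Fourier coefficients of a nonzero Hecke cusp form at primes), should force $P \equiv 0$ and then $c \equiv 0$. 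I expect the main obstacle to lie in this last analytic step: in the boundary regime $\deg P = (k^{\ast}-1)/2$ both sides grow at the same rate, so a more delicate $L$-function or equidistribution argument is required to conclude, and extra care must be taken to handle the non-eigenform pieces of $c$ (e.g.\ derivatives $D^{j}$ of modular cusp forms, which twist the relevant $L$-functions by $n^{j}$).
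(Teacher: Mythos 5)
Your route is legitimately different from the paper's. The paper obtains the conjecture from Theorem \ref{thm:KKL}, proved in the authors' earlier work and sketched in Section \ref{sec:signchanges}: one shows the cuspidal part would exhibit infinitely many sign changes along primes (via the first--moment prime number theorem for the attached $L$-functions, a Rankin--Selberg second-moment lower bound, and Deligne's pointwise bound), while the Eisenstein part has eventually constant sign along primes, contradicting $c_c(p)=-c_e(p)$ unless $c=0$. You instead use the relation $c_c(p)=-P(p)$ directly and compare second moments over primes; this uses the same analytic inputs (Deligne plus Rankin--Selberg/Selberg orthogonality) but bypasses the first moment and the sign-change formalism entirely, which is a genuine simplification.

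The one step you leave open --- the ``boundary regime'' where the two second moments grow at the same rate --- is in fact vacuous at level one, for a parity reason, so no Sato--Tate or equidistribution input is needed. Writing $c=\sum_f\sum_j A_f(j)D^jf$ over level-one Hecke eigenforms gives $c_c(p)=\sum_f P_f(p)a_f(p)$ with $P_f$ a polynomial of degree $j_f$, and the Rankin--Selberg diagonal yields $\sum_{p\le X}|c_c(p)|^2\gg X^{\beta_0}/\log X$ for $c\neq 0$, where $\beta_0=\max_f(2j_f+k_f)$ is an \emph{even} integer since every level-one weight $k_f$ is even (the off-diagonal terms are lower order by Selberg orthogonality, and the diagonal terms attaining the maximum are all non-negative, so no cancellation occurs). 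Deligne gives $|P(p)|=|c_c(p)|\ll p^{(\beta_0-1)/2}$, and since $(\beta_0-1)/2$ is a half-integer while $\deg P$ is an integer, necessarily $2\deg P+1\le\beta_0-1<\beta_0$; hence $\sum_{p\le X}P(p)^2\ll X^{\beta_0-1}/\log X$ can never match the lower bound, forcing $c=0$, after which $P(p)=0$ at all primes gives $P\equiv 0$ and $f=e\in\mathcal{E}$. Two housekeeping points: split $f$ into real and imaginary parts first (as the paper does) so that the quantity summed really is $|c_c(p)|^2$, and note that your claimed exponent $k^\ast$ must be taken as $\max_f(2j_f+k_f)$ rather than the bare top weight, to account for the derivative twists you mention.
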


    In a recent work \cite{KKL1}, we prove this conjecture for quasi-modular forms of full level. In fact we deduce Conjecture \ref{Conj:CvIO} as a consequence of the following slightly stronger result. Let $\widetilde{\Omega}$ be the set of all quasi-modular forms that detect primes. By this we mean that the Fourier coefficients of the quasi-modular form detect primes as in Definition \ref{defn:Detect}.
    \begin{thm}\label{thm:KKL}
        With notation as above, $\widetilde{\Omega} \subset \mathcal{E}$. In particular, $\Omega \subset \mathcal{E}$.
    \end{thm}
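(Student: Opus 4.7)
My plan is to write any $f \in \tilde{\Omega}$ as $f = F_E + F_C$, with $F_E \in \mathcal{E}$ and $F_C$ a cuspidal quasi-modular form, and to show that the hypothesis $c_f(p)=0$ for all primes $p$ forces $F_C = 0$. This decomposition arises from the structure theory on $\SL_2(\Z)$: each quasi-modular form admits a unique representation $f = \sum_j f_j E_2^j$ with $f_j$ modular of weight $W-2j$, and splitting each $f_j$ into its Eisenstein and cuspidal pieces produces $F_E$ and $F_C$. Using the Ramanujan identities (e.g.\ $E_2 h = \tfrac{12}{k}(Dh - \vartheta_k h)$ for $h \in S_k$, where $D = q\,\tfrac{d}{dq}$) iteratively, I would further rewrite $F_C = \sum_j D^j h_j$ with $h_j$ a honest cusp form of weight $W-2j$; a dimension count against the Kaneko--Zagier decomposition confirms this identification.

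The heart of the plan is a pair of contrasting estimates for Fourier coefficients at primes. First, because $\mathcal{E}$ is spanned by the derivatives $D^j E_k$ and because $c_{D^j E_k}(p) = c_k(p^j + p^{j+k-1})$, the coefficient $c_{F_E}(p) = P(p)$ is an explicit polynomial in $p$ depending linearly on $F_E$. Second, Deligne's bound on $c_{h_j}(p)$ together with the factor $p^j$ arising from $D^j$ gives $|c_{F_C}(p)| \leq C\,p^{(W-1)/2+\epsilon}$, where $W$ is the top weight of $f$. The hypothesis $c_f(p)=0$ yields $P(p) = -c_{F_C}(p)$ at every prime, so a pointwise size comparison forces $P$ to have integer degree at most $W/2-1$ (using that $W$ is even), whence $\sum_{p \leq X}|P(p)|^2 \ll X^{W-1}/\log X$. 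On the other hand, expanding each $h_j$ in a Hecke eigenbasis and applying classical Rankin--Selberg, together with prime-number-theorem estimates for Rankin--Selberg $L$-functions $L(s, g \times g')$ of distinct eigenforms, yields $\sum_{p \leq X}|c_{F_C}(p)|^2 \sim c\, X^W/\log X$ with $c > 0$ whenever $F_C \neq 0$. Since $P(p)=-c_{F_C}(p)$, the two prime sums agree, producing the contradiction $X^W/\log X \ll X^{W-1}/\log X$ and forcing $F_C = 0$.

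The central obstacle is the Rankin--Selberg step in the quasi-modular setting. Expanding $|c_{F_C}(p)|^2 = \left|\sum_j p^j c_{h_j}(p)\right|^2$ produces diagonal terms $p^{2j}|c_{h_j}(p)|^2$, each of average size $p^{W-1}$, whose prime sums accumulate to $\sim X^W/\log X$, and off-diagonal terms $p^{j_1+j_2} c_{h_{j_1}}(p)\overline{c_{h_{j_2}}(p)}$ with $j_1 \neq j_2$. After decomposing $h_j$ in a Hecke eigenbasis, the off-diagonal cross-prime sums reduce to estimates for $\sum_p c_g(p)\overline{c_{g'}(p)}$ over distinct eigenforms (necessarily of different weights when $j_1 \neq j_2$), which are $o(X^W/\log X)$ because the corresponding Rankin--Selberg $L$-function is holomorphic at the edge of its critical strip. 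A secondary technical issue is the mixed-weight case $f = \sum_W f_W$, which I would dispatch by working with the top weight $W^\ast$ of $f$ and inducting downward, using at each stage that the degree constraint on $P$ coming from the remaining cuspidal bound tightens accordingly.
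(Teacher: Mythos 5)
Your proposal is sound and reaches the right conclusion via the same decomposition the paper uses ($f=F_E+F_C$ with $F_E$ Eisenstein and $F_C$ a sum $\sum_j D^j h_j$ of derivatives of cusp forms, with $c_{F_E}(p)$ an explicit polynomial in $p$), but the mechanism of the final contradiction is genuinely different. The paper's proof (carried out in \cite{KKL1} and reproduced here in Lemma~\ref{lem:signchanges} and the proof of Theorem~\ref{thm:VanishPrimesArithmetic}) is a \emph{sign-change} argument: it combines a first-moment estimate $\sum_{p\le x}c_{F_C}(p)=o(x^{\alpha_0}/\log x)$ (which requires the prime number theorem for the $L$-functions of the individual eigenforms), the Rankin--Selberg second moment, and the pointwise Deligne bound to show $\{c_{F_C}(p)\}$ changes sign infinitely often, contradicting the eventually constant sign of $-c_{F_E}(p)$. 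You instead use the pointwise identity $P(p)=-c_{F_C}(p)$ together with Deligne's bound to cap the degree of $P$ at $W/2-1$, and then contrast $\sum_{p\le X}|P(p)|^2\ll X^{W-1}/\log X$ with the Rankin--Selberg lower bound $\gg X^{W}/\log X$. This dispenses with the first-moment PNT entirely and with the sign-change lemma that is the paper's headline tool; what you still need, exactly as the paper does, is Selberg orthogonality for the off-diagonal Rankin--Selberg terms (distinct eigenforms, automatically of distinct weights for $j_1\ne j_2$ at level one). Your route is arguably leaner for this particular theorem; the paper's sign-change formulation is what generalizes cleanly to the higher-level statements that are the main point of the present article, where one must contrast sign changes of the cuspidal part against the \emph{eventually constant sign} (possibly $0$, $+1$, or $-1$ depending on the residue class) of the Eisenstein part along arithmetic progressions.

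One technical point to tighten: in the mixed-weight case the degree bound on $P$ should be driven by the top weight $W_c$ of the \emph{nonzero cuspidal components} rather than the top weight of $f$. Since $|c_{F_C}(p)|\ll p^{(W_c-1)/2}$ pointwise, the identity $P(p)=-c_{F_C}(p)$ already forces $\deg P\le W_c/2-1$, so $\sum_{p\le X}|P(p)|^2\ll X^{W_c-1}/\log X$ while the diagonal Rankin--Selberg terms give $\gg X^{W_c}/\log X$; this yields the contradiction in a single step and removes the need for your downward induction on the top weight of $f$.
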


    
    There are at least two (somewhat interdependent) natural ways to generalize Theorem \ref{thm:KKL}, which shall be the focus of this paper. The first involves allowing quasi-modular forms of higher level, and the second involves detecting primes in arithmetic progression. Naturally, to detect primes in arithmetic progression, we expect to consider quasi-modular forms of higher level.

Before moving on to a generalization, let us give an example that resembles \eqref{eqn:PrimeDetectingExample}. Emulating \eqref{eqn:McMahonDef}, for $a\in\N$, two  subsets $S_1,S_2\subseteq\{1,2,\dots,a\}$, and a character $\chi$ modulo some $N\in\N$, consider the weighted partition count
\[
M_{a,S_1,S_2,\chi}(n):=\underset{n = m_1s_1 + \ldots + m_as_a}{\sum_{0 < s_1 <\ldots < s_a}} \prod_{j\in S_1} \chi(m_j) \prod_{j\in S_2} m_j,
\]
so that $M_a$ is the case where $S_2=\{1,2,\dots,a\}$ and $\chi$ is trivial. Then it is easy to check that $n\equiv 1\pmod{3}$ is prime if and only if
\begin{equation}\label{eqn:E2Example}
M_1(n)=\frac{n+1}{2} M_{1,\{1\},\emptyset,\chi_{-3}}(n),
\end{equation}
where $\chi_d(n):=\left(\frac{d}{n}\right)$ for a discriminant $d$ is the real Kronecker--Jacobi--Legendre character. The example \eqref{eqn:E2Example} will turn out to arise from one of the forms that we consider in our generalization (see Remark \ref{rem:E2Example}). In order to give the precise generalization of Theorem \ref{thm:KKL}, let us introduce some notation. For $N,M\in\N$ and $m\in\Z$ with $\gcd(m,M)=1$, we let $\widetilde{\Omega}_{m,M}(\Gamma_1(N))$ be the set of quasi-modular forms $f$ of level $N$ (i.e., modular on $\Gamma_1(N)$) for which $c_f(p)=0$ for every prime $p\equiv m\pmod{M}$ with $p\nmid N$. Similarly, we let $\Omega_{m,M}(\Gamma_1(N))$ be those forms which (whose Fourier coefficients) are vanishing only at the primes in the corresponding arithmetic progression; that is $f\in \Omega_{m,M}(\Gamma_1(N))$ if and only if 
    \[
    \left\{n\equiv m\pmod{M}: \gcd(n,N)=1,\ c_{f}(n)=0\right\}=\left\{p\equiv m\pmod{M}: p\text{ prime},\ p\nmid N\right\}.
    \]
    More generally we may define $\widetilde{\Omega}_{m,M}(\Gamma_0(N), \chi)$ to be the set of all quasi-modular forms of level $\Gamma_0(N)$ and character $\chi$ which detect primes etc. We note that $\Omega \subsetneq  \Omega_{0,1}(\SL_2(\Z))$ (if $0\neq f\in \Omega$, then $-f\notin \Omega$ but $-f\in\Omega_{0,1}(\SL_2(\Z))$), as $\Omega_{m,M}(\Gamma_1(N))$ only assumes that $c_f(n)\neq 0$ for $n\equiv m\pmod{M}$ instead of the stronger assumption that $c_f(n)>0$ imposed on $\Omega$. The assumption $c_f(n)>0$ becomes more unnatural for higher level because the Fourier coefficients of Eisenstein series alternate, albeit much more regularly than those of cusp forms (compare Lemma \ref{lem:EisensteinArithmetic} with Lemma \ref{lem:signchangesarithmetic}).

    Naively, we may ask if an analogue to Theorem \ref{thm:KKL} still holds true if we allow forms of higher level. It turns out that this is not true. We describe below some obstructions.
    
    To describe the first obstruction, let $F$ be the prime-detecting quasi-modular form $f_{1,\ell}$ (of level 1) defined in \cite[Lemma 9]{CvIO}  where $\ell\geqslant 3$. Its Fourier coefficients $c_F(n)$ are all non-negative and for $n>1$, $c_F(n)=0$ if and only if $n$ is a prime. Indeed, from the proof, one can see that $c_F(n) \geqslant n^\ell$ for all composite $n>1$.  Suppose $\ell \geqslant 12$ and $N>1$. For any cusp form $f$ of weight $\ell$ and level $1$, $f(Nz)$ is a cusp form of level $N$ and whose coefficients vanish at all primes except possibly $N$ (if $N$ is prime) and have their magnitude bounded by $n^{(\ell-1)/2}d(n)$ (where $d(n)$ is the number of prime divisors of $n$). Fix such a form $f$ with real coefficients and choose a sufficiently large constant $C_{f}$. The quasi-modular form  
    \begin{equation}\label{eqn:NaiveGeneralization}
    E := C_{f} F(z)+ f(Nz)-\frac1{2\pi iN} f'(Nz)
    \end{equation}
    is of level $N$ and strongly detects primes but it does not lie in the space of Eisenstein series.
    
    Furthermore, if $E\in \mathcal{E}(\Gamma_0(N),\chi)\cap \widetilde{\Omega}_{1,3}(\Gamma_0(N),\chi)$, then for any quasi-modular form $f$ of level $N$ and character $\chi$, so is 
    \[
    E+f-f\otimes \chi_{-3},
    \]
    where $f\otimes \chi$ denotes the quadratic twist $f\otimes\chi(\tau)=\sum_{n\geq 0} \chi(n)c_f(n)q^n$ of $f$ and $\chi_D(n):=\left(\frac{D}{n}\right)$ is the Kronecker character. Here $\mathcal{E}(\Gamma)$ denotes the space spanned by Eisenstein series of $\Gamma$ and their derivatives (of arbitrary mixed weights), with $\mathcal{E}(\Gamma_0(N),\chi)$ denoting the quasi-modular forms in $\mathcal{E}(\Gamma_1(N))$ which transform with character $\chi$ on $\Gamma_0(N)$.
    
    Finally, we observe that ``old forms'' also form an obstruction to a generalization of Theorem \ref{thm:KKL} (see Remark \ref{rem3} below). Namely, one can use a construction like \eqref{eqn:NaiveGeneralization} to build higher level non-zero cusp forms which are only supported on $n$ with $\gcd(n,N)>1$. This forces us to consider only those $n$ which are relatively prime to $N$. Hence, instead of considering the question of detection of primes in an arithmetic progression $n\equiv m\pmod{M}$, for level $N$ forms we only consider detection of primes in the arithmetic progressions $n\equiv m\pmod{M}$ with $\gcd(n,N)=1$ (giving a union of arithmetic progressions modulo $\lcm(N,M)$). Although this does indeed cause a restriction on the arithmetic progressions considered, we note that there are only finitely many primes $p\mid N$, so our restriction only excludes finitely many primes.

    Thus, as the above examples show, a naive generalization of Theorem \ref{thm:KKL} is not true. However, in all the obstructions discussed above, we observe that the cuspidal part, albeit non-zero, does not contribute to the Fourier coefficients \textit{within the arithmetic progression being considered} (after restricting to those $n$ relatively prime to the level). Thus, if we restrict our attention to the particular arithmetic progression, the cuspidal part can be considered to be zero.
    
    
    To make matters more precise, we introduce the following sieving operator. For $m\in\Z$ and $M\in\N$, we define the \begin{it}sieving operator\end{it} $S_{M,m}$ acting on quasi-modular forms by 
    \[
    f\big|S_{M,m}(\tau):=\sum_{n\equiv m\pmod{M}} c_f(n)q^n.
    \]
    The sieving operator $S_{M,m}$ maps quasi-modular forms of level $N$ to quasi-modular forms of level $\lcm(N,4M^2,MN)$ (for example, see \cite[Lemma 2.2 (2)]{Vanishing}). 
    Moreover, for $A\subset \Z$, we write $S_A$ for the restriction operator,
    $$
    S_A: \sum_n c(n)q^n\mapsto \sum_{n\in A} c(n)q^n.
    $$
    and set $A_N:=\{n\in \Z: \gcd(n,N)=1\}$.
 
    When detecting primes in the arithmetic progression $m\pmod{M}$, we may obviously add an arbitrary $g|S_{M',m'}$ for any arithmetic progression $m'\pmod{M'}$ that does not intersect the arithmetic progression $m\pmod{M}$. More generally, we may consider all the forms in the kernel of the sieving operator $S_{M,m}$. Observe that, in all the examples above, the cuspidal part was an element of the kernel of the corresponding sieving operator, after restricting to those $n$ relatively prime to the level). Thus $\ker(S_{M,m})$ sits naturally inside $\widetilde{\Omega}_{m,M}(\Gamma_1(N))$ as a subspace. Our main theorem asserts that, once we annihilate the contribution from the kernel of the sieving operator, what remains is Eisenstein.\footnote{One can readily justify excluding elements of the kernel in a reasonable definition of forms that detect primes in an arithmetic progression because \underline{within that arithmetic progression} elements of the kernel do not distinguish primes and non-primes, vanishing at \underline{every} integer in the arithmetic progression.}
        
    \begin{thm}\label{thm:VanishPrimesArithmetic}
    For $M,N\in\N$ and $m\in\Z$ with $\gcd(m,M)=1$, let $\mathcal{V}_{m,M}=\mathcal{V}_{m,M}(N)$ be the subset of $n\in \{0,\dots N-1\}$ for which $(N\Z+n)\cap (M\Z+m)$ is non-empty\footnote{$(N\Z+n )\cap (M\Z+m) \neq \emptyset$ iff $m\equiv n$ mod $(M,N)$. The ``only if" part is clear, while for the ``if" part, we have the solution $x = m-  (m-n) \alpha M/(M,N) = n + (m-n)\beta N/(M,N)$ where $\alpha M+\beta N= (M,N)$.} and $\gcd(n,N)=1$. 
    \begin{enumerate}
        \item Suppose that $f$ is a quasi-modular form of mixed weight and level $N$ and write $f=E+g$ for $E\in\mathcal{E}(\Gamma_1(N))$ and a cuspidal (mixed-weight) quasi-modular cusp form $g$. Then 
        $f\in \widetilde{\Omega}_{m,M}(\Gamma_1(N))$ if and only if $E\in\widetilde{\Omega}_{n,N}(\Gamma_1(N))$ and $g|S_{M,m}|S_{N,n}=0$ for every $n\in\mathcal{V}_{m,M}$. In particular $g|S_{m,M}|S_{A_N}=0$ and
          \begin{equation}\label{eqn:strongInclusion}
            \widetilde{\Omega}_{m,M}\left(\Gamma_1(N)\right)|S_{M,m}|S_{N,n} \subseteq \left(\mathcal{E}\left(\Gamma_1(N)\right)\cap\widetilde{\Omega}_{n,N}\left(\Gamma_1(N)\right)\right)\Big|S_{N,n}\Big|S_{M,m},
        \end{equation} 
        with equality if $M=N$.
    \item
    Moreover, all of the coefficients of elements of $\widetilde{\Omega}_{m,M}(\Gamma_1(N))$ from the arithmetic progression $n\equiv m\pmod{M}$ relatively prime to $N$ come from coefficients of Eisenstein series (and their derivatives) in the sense that 
    \[
    \bigoplus_{n\in\mathcal{V}_{m,M}}\widetilde{\Omega}_{m,M}\left(\Gamma_1(N)\right)|S_{M,m}|S_{N,n}\subseteq\bigoplus_{n\in \mathcal{V}_{m,M}} \left(\mathcal{E}\left(\Gamma_1(N)\right)\cap \widetilde{\Omega}_{n,N}\left(\Gamma_1(N)\right)\right)|S_{M,m}|S_{N,n}. 
    \]
and there exists some $N\mid N'\mid 4N^2$ for which 
\[    \bigoplus_{n\in\mathcal{V}_{m,M}}\widetilde{\Omega}_{m,M}\left(\Gamma_1(N')\right)|S_{M,m}|S_{N,n}\supseteq\bigoplus_{n\in \mathcal{V}_{m,M}(N)} \left(\mathcal{E}\left(\Gamma_1(N)\right)\cap \widetilde{\Omega}_{n,N}\left(\Gamma_1(N)\right)\right)|S_{M,m}|S_{N,n}. 
\]
\item
    In particular, we have 
    \begin{multline*}
\Omega_{m,M}\left(\Gamma_1(N)\right)|S_{M,m}|S_{A_N}\\
=\left(\Bigg(\bigoplus_{n\in \mathcal{V}_{m,M}} \left(\mathcal{E}\left(\Gamma_1(N)\right)\cap \widetilde{\Omega}_{n,N}\left(\Gamma_1(N)\right)\right)|S_{N,n}\Bigg)\cap\left(\Omega_{m,M}\left(\Gamma_1(N)\right)|S_{A_N}\right)\right)\Big| S_{M,m} .
    \end{multline*}
\end{enumerate}
\end{thm}

\begin{rem}
    Before we move forward, we mention that a Galois theoretic proof of Conjecture \ref{Conj:CvIO} was recently obtained in \cite{BvIMOS}. Their proof relies on an extension of the fundamental lemma of Ono-Skinner \cite{Ono-Skinner}. Their proof rests on showing that the Fourier coefficients of cusp forms vary erratically in congruence classes, while the proof of Theorem \ref{thm:KKL} rests on showing that the signs of the Fourier coefficients of cusp forms vary erratically. With regard to forms of higher level, the proof of \cite{BvIMOS} seems to follow through but for an important caveat. The forms considered should not have complex multiplication (in the sense of Ribet \cite{Ribet}). Observe that this is not a restriction for the full level case as there are no non-zero CM forms of full level. Our proof does not require this restriction and works in a fairly general setting.\footnote{We note that this is partially a consequence of our decision to exclude elements of the kernel as ``detecting primes" in Theorem \ref{thm:VanishPrimesArithmetic}; in particular, if a CM form vanishes in an arithmetic progression due to CM properties, then it vanishes for \underline{all} coefficients in that arithmetic progression (not just the primes), so it lies in the kernel component.} We rely on sign changes, a less demanding tool than congruences. The key input here is provided by the prime number theorems\footnote{For an 
    $L$-function $L(s)=\sum_{n\geqslant 1} a_n n^{-s}$ (initially defined for $\Re(s)> 1$), 
    the prime number theorem refers to the asymptotic formula of the form
    \[
    \sum_{p \leqslant x} a_p \;=\; \frac{c\,x}{\log x} \,+\, o\left(\frac{x}{\log(x)}\right),
    \]
    where the constant $c$ may be zero.} for $L$-functions and Rankin-Selberg $L$-functions attached to quasi-modular forms (these are easily seen to be shifts of the corresponding $L$ functions attached to the original holomorphic modular forms and enjoy all of their analytic properties). These results rest on the analytic properties of the associated $L$-functions and therefore ultimately on the theory of newforms, irrespective of whether or not the forms possess complex multiplication.

\end{rem}

    \begin{rem}
        A weaker statement (than Theorem \ref{thm:VanishPrimesArithmetic}) along the lines of Theorem \ref{thm:KKL} reads that, for any $n\in\mathcal{V}_{m,M}$,
        \begin{equation}\label{eqn:WeakInclusion}
            \widetilde{\Omega}_{m,M}\left(\Gamma_1(N)\right)|S_{M,m}|S_{N,n} \subset \mathcal{E}\left(\Gamma_1(N)\right)|S_{M,m}|S_{N,n}.
        \end{equation}
    \end{rem} 
    \begin{rem}\label{rem3}
        The condition $\gcd(n,N)=1$ in the theorem is necessary. There is a natural operator $V_d$ defined by
         \[
        f|V_d(\tau)=f(d\tau)=\sum_{n\geq 0}c_f(n)q^{dn}.
         \]
        This operator sends quasi-modular forms of level $N$ to those of level $Nd$. By applying the operators $V_p$ to forms of level $N'\mid N$, we may artificially force $c_f(p)=0$ for $p\mid N$. For example, if $f$ is a quasi-modular form of level $N$ and $g$ is a quasi-modular form of level $\frac{N}{p}$ with $c_g(1)\neq 0$, then the Fourier coefficients of 
        \[
        f-\frac{c_f(p)}{c_g(1)} g|V_p
        \]
        vanish precisely at the prime $p$ and any $p'$ for which $c_f(p')=0$. 
    \end{rem}

    \begin{rem}

    The sieving operators $S_{M,m}$ and $S_{N,n}$ are a commuting family of projections in the sense that $S_{N,n}\circ S_{M,m} = S_{M,m}\circ S_{N,n} = S_{(n+N\Z)\cap (m+M\Z)}$. 
\end{rem}
Recall that, if non-empty, by the Chinese remainder theorem, $(n+N\Z)\cap (m+M\Z)$ defines an arithmetic progression modulo $\lcm(N,M)$. If $(n+N\Z)$ does not intersect with $(m+M\Z)$, then by $S_{(n+N\Z)\cap (m+M\Z)}$, we denote the zero operator (which annihilates every quasi-modular form). Therefore the direct sum over $\bigoplus_{n\in \mathcal{V}_{m,M}}$ in Theorem \ref{thm:VanishPrimesArithmetic} may be replaced with $\bigoplus_{n\in A_N}$. Writing in this way, we have the following strengthening of Theorem \ref{thm:KKL}.
    \begin{cor}
        When $M=1,m=0$, we have 
        \begin{equation*}            \widetilde{\Omega}_{0,1}\left(\Gamma_1(N)\right)|S_{A_N} \subset \mathcal{E}\left(\Gamma_1(N)\right)|S_{A_N}.
        \end{equation*}   
    \end{cor}

    \begin{rem}\label{rem:kernel} 
    We also note that $S_{N,n}\circ S_{N,n} = S_{N,n}$ (and similarly for $S_{M,m}$ etc.). In particular, $S_{M,m}\circ S_{N,n}$ is a projection into the images of the respective sieving operators.
    Every element in $\widetilde{\Omega}_{m,M}(\Gamma_1(N))$ can be written \textit{uniquely} as a sum of quasi modular forms coming from $\widetilde{\Omega}_{m,M}(\Gamma_1(N))\cap\ker(S_{M,m}\circ S_{N,n})$ and $\widetilde{\Omega}_{m,M}(\Gamma_1(N))|S_{M,m}|S_{N,n}$. Our main theorem now asserts that the second component is Eisenstein.
    
    
    \end{rem}

    As in Remark \ref{rem:kernel}, the space $\widetilde{\Omega}_{m,M}(\Gamma_1(N))$ naturally breaks into two components; the first one arising from the kernel of the sieving operators, and the second arising from quasi-modular Eisenstein series. The sieving operator $S_{m,M}$ is a projection operator and its kernel is quite large as soon as $M \geqslant 2$ (for example given a cusp form $f$, $f-f|S_{m,M}\in \ker(S_{m,M})$). Thus we direct our attention to the Eisenstein series part of $\widetilde{\Omega}_{m,M}(\Gamma_1(N))$. We define the Eisenstein series following Sections 4.5 and 4.6 of \cite{DiamondShurman}. Let $\chi, \psi$ be Dirichlet characters, primitive of conductors $N_1, N_2$ respectively. Let $k\geqslant 2$ be an integer. Suppose that $\chi(-1)\psi(-1) = (-1)^k$. Suppose
    \begin{equation}\label{eqn:EisensteinSeriesDefinition}
        E_{k,\chi,\psi} (\tau) = \delta(\chi) L(1-k, \psi) + 2 \sum_{n=1}^{\infty} \sigma_{k-1}^{\chi,\psi}(n) q^{n},
    \end{equation}
    where as before $q = e^{2\pi i\tau}$ and where
    \begin{equation}\label{eqn:DivisorFunctionDefn}
        \sigma_{k-1}^{\chi,\psi}(n) = \sum_{d | n} \chi\left(\frac{n}{d}\right) \psi(d) d^{k-1}
    \end{equation}
    is the weighted divisor function. The constant $\delta(\psi)$ equals $1$ if $\psi =1$ and is zero otherwise, where we simply write $1$ for the trivial character throughout. If $N$ is the least common multiple of $N_1, N_2$ so that $\chi\psi$ is a primitive Dirichlet character modulo $N$, $E_{k,\chi,\psi}$ is modular of weight $k$ and level $\Gamma_0(N)$ with Nebentypus character $\chi\psi$. For ease of notation, we set 
    \[
    E_{2,1,1}:=-\frac{1}{12}E_2,
    \]
    where $E_2$ is the usual non-holomorphic Eisenstein series of weight $2$, with Fourier expansion given by
    \[
    E_2(\tau) := 1 - 24 \sum_{n=1}^{\infty} \sigma_1(n) q^n=1-24\sum_{n=1}^{\infty} \sigma_{1}^{1,1}(n) q^n.
    \]
For $k=1$, following Section 4.8 of \cite{DiamondShurman}, we also define the weight $1$ Eisenstein series 
\[
E_{1,\chi,\psi}(\tau)= \delta(\chi) L(0, \psi) +\delta(\psi) L(0, \chi)+2 \sum_{n=1}^{\infty} \sigma_{0}^{\chi,\psi}(n) q^{n},
\]
where the $\chi$ and $\psi$ are only taken as an unordered pair in this case, due to the additional symmetry. 

    Ultimately, we want to describe a spanning set for $\widetilde{\Omega}_{m,M}(\Gamma_{1}(N))|S_{A_N}$\footnote{Similar to the $H_{k}$'s defined in \cite{CvIO}.}. In light of the fact that the right-hand side of \eqref{eqn:strongInclusion} only depends on $M$ in the final sieve operator on the right, it is natural to restrict to $M=N$ and find a spanning set of $(\mathcal{E}(\Gamma_1(N))\cap\widetilde{\Omega}_{n,N}(\Gamma_1(N)))|S_{N,n}$ for every $n\in A_N$ (recall that $\mathcal{V}_{m,M}\subseteq A_N$).
    To this end, we make the following definition. Let $N\in \mathbb{N}$ be fixed and choose $n$ coprime to $N$. For every pair of integers $k,\ell \geqslant 1$ with $k+\ell\geq 4$ and a primitive Dirichlet character $\chi$ modulo a divisor of $N$ with $\chi(-1)=(-1)^k$, define
    \[
        H_{k, \ell, \chi,n} := 
    \begin{cases} \overline{\chi(n)}D^{\ell-1}E_{k, 1, \chi} - E_{\ell, 1, \overline{\chi}}&\text{if }k\equiv \ell\pmod{2},\\
    \overline{\chi(n)}D^{\ell-1}E_{k, 1, \chi} - D E_{\ell-1, 1, \overline{\chi}}&\text{if }k\not\equiv \ell\pmod{2},\ \ell\geq 2,\\
    \overline{\chi(n)}E_{k,1,\chi}-\overline{\chi(n)}E_{2,1,\chi}&\text{if }k>2\text{ even, }\ell=1.
 \end{cases}
\]
Here $D$ is the familiar differential operator, defined as
    \[
    D:= \frac{1}{2\pi i} \frac{d}{d\tau},
    \]
To extend the definition of $H_{k,\ell,\chi,n}$ for $2\leq k+\ell\leq 3$, we fix a primitive odd character $\varphi_n$ modulo a divisor of $N$ and assume that $\varphi_n(n)\neq -1$ if such an odd character exists (otherwise $\chi(n)=-1$ for all odd characters). We then define 
\[
C_n:=\begin{cases} 0&\text{if }\varphi_n(n)=-1,\\
\frac{1}{1+\varphi_n(n)}&\text{if }\varphi_n(n)\neq -1.
\end{cases}
\]
Using this, for $k,\ell\in\N$ with $2\leq k+\ell\leq 3$ (and $\chi$ as before) we define    
    \[
    H_{k, \ell, \chi,n} :=
    \begin{cases} 
    \overline{\chi(n)} E_{2,1,\chi}-E_{2,1,1}- (\overline{\chi(n)}-1)C_nE_{1,1,\varphi_n}&\text{if }k=2,\ \ell=1,\chi\neq 1,\\ 
    E_{2,1,1}-C_n \left(DE_{1,1,\varphi_n}+E_{1,1,\varphi_n}\right)&\text{if }k=2,\ell=1,\chi=1,\ \varphi_n(n)\neq -1,\\
DE_{1,1,\chi}-(1+\chi(n)) C_n D E_{1,1,\varphi_n}&\text{if }k=1,\ \ell=2,\\
E_{1,1,\chi} -(1+\chi(n))C_nE_{1,1,\varphi_n}&\text{if }k=\ell=1,
\end{cases}
\]
where we only define $H_{2,1,1,n}$ if $\varphi_n(n)\neq -1$. 
\begin{rem}
Note that, by our convention,   $\varphi_n(n)=-1$ if and only if $\varphi(n)=-1$ for all odd characters, and if all odd characters satisfy $\varphi(n)=-1$, then all even characters satisfy $\varphi(n)=1$ (multiplication by an odd character is a bijection between them and all odd characters satisfy $\varphi(n)=-1$ by assumption). 
\end{rem}
\begin{rem}\label{rem:E2Example}
The difference between the left-hand and right-hand sides of the example \eqref{eqn:E2Example} is simply half the $n$-th Fourier coefficient of $E_{2,1,1}-\frac{1}{2}DE_{1,1,\chi_{-3}}-\frac{1}{2}E_{1,1,\chi_{-3}}$, and for $N=3$ and $n=1$ we have $\varphi_{n}=\chi_{-3}$ and $C_n=\frac{1}{2}$.    
\end{rem}
For $K\geq 4$, we define a vector $\vec{v}$ of length $r=2$ with $j$-th component $(k_j,\ell_j,\chi_j)$ satisfying $k_j+\ell_j=K$ for all $j$. For $2\leq K\leq 3$, we define a vector $\vec{v}$ of length $r=1$ given by $\vec{v}=(k_1,\ell_1,\chi_1)$ satisfying $k_1+\ell_1=K$, with the additional restriction (for $K\leq 3$) that we only allow $\vec{v}=(2,1,1)$ if $\varphi_n(n)\neq -1$. For such $\vec{v}$, we then define 
\[
\mathscr{H}_{K,\vec{v},n}:=\begin{cases}
H_{k_1,\ell_1,\chi_1,n} &\text{if }r=1,\ K\leq 3, \\    H_{k_1,\ell_1,\chi_1,n}-H_{k_2,\ell_2,\chi_2,n}&\text{if }r=2\text{ and }K\geq 4.
\end{cases}
\]

We claim that $\mathscr{H}_{K,\vec{v},n}\in\widetilde{\Omega}_{n,N}(\Gamma_1(N))$ and that these span the space of such forms in the following sense.
\begin{thm}\label{thm:Basis}
        For each $n$ relatively prime to $N\in\N$, the space 
        \[
\widetilde{\Omega}_{n,N}\left(\Gamma_1(N)\right)|S_{N,n}
        \]
        is spanned by (the image under the sieving operator $S_{N,n}$ of)
        \begin{multline*}
            \bigcup_{K=4}^{\infty} \Bigg(\bigg\{\mathscr{H}_{K,\left(\begin{smallmatrix}k_1,\ell_1,\chi_1\\ k_2,\ell_2,\chi_2\end{smallmatrix}\right),n}\ \Big|\ k_j,\ell_j\geq 1,\ k_1+\ell_1 = k_2+\ell_2 = K\bigg\}\\ \cup\bigcup_{K=2}^{3} \Bigg(\bigg\{\mathscr{H}_{K,\left(\begin{smallmatrix}k_1,\ell_1,\chi_1\end{smallmatrix}\right),n}\ \Big|\ k_1,\ell_1\geq 1,\ k_1+\ell_1 = K \ \bigg\}.
        \end{multline*} 
Here $\chi_j$ run through primitive characters modulo divisors of $N$ and for $2\leq K\leq 3$ we have the additional restriction that $(k_1,\ell_1,\chi_1)=(2,1,1)$ only if $\varphi_n(n)\neq -1$ and $(k_1,\ell_1,\chi_1)=(1,\ell_1,\varphi_n)$ with $1\leq \ell_1\leq 2$ only if $\varphi_n(n)=-1$.
 \end{thm}
    \begin{rem}
By \eqref{eqn:strongInclusion}, it is reasonable to consider $\widetilde{\Omega}_{m,M}(\Gamma_1(N))$ for each $n\in\mathcal{V}_{m,M}$ one-by-one, and then write  
\[
f=\sum_{n\in\mathcal{V}_{m,M}} f|S_{N,n}+\sum_{n\notin \mathcal{V}_{m,M}} f|S_{N,n}.
\]
However, note that $f|S_{N,n}$ may have level up to $N^2$, so the forms spanning the projections do not themselves lie in the space of level $N$ forms. Namely, if we take $h_n$ in the above spanning set, then 
\[
\sum_{n\in\mathcal{V}_{m,M}} h_n|S_{N,n}\in \bigoplus_{n\in\mathcal{V}_{m,M}}\Omega_{m,M}(\Gamma_1(N))|S_{N,n}
\]
has some level $N'$ dividing $N^2$, but there may not exist $h$ of level $N$ for which 
\[
h=\sum_{n\in\mathcal{V}_{m,M}} h_n|S_{N,n}+\sum_{n\notin\mathcal{V}_{m,M}} h|S_{N,n}.
\]
So an arbitrary linear combination of forms from the spanning set may not lie in the space we are considering, and we do not consider the problem of determining whether they glue together to give a single form of the correct level.
    \end{rem}
    We also have an analogue of \cite[Theorem 1.3]{KKL1} which may be proved similarly.
    
    \begin{thm}\label{thm:FiniteCheckAP}
        Suppose that $f\in \mathcal{E}(\Gamma_1(N))$. There exists an integer $r$, such that if there exists primes $\{p_1, \ldots, p_r\}$ all congruent to $m$ mod $M$ such that $c_{f}(p_i)=0$ for $i=1,2,\ldots,r$, then $f\in \widetilde{\Omega}_{m,M}(\Gamma_1(N))$.
    \end{thm}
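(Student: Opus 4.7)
The plan is to adapt the argument of \cite[Theorem 1.3]{KKL1} to the higher level setting. The main new feature is the presence of Dirichlet characters in the Fourier coefficients of the Eisenstein series. First, I would write $f\in\mathcal{E}(\Gamma_1(N))$ as a finite linear combination
\[
f=\sum_j a_j\,D^{s_j-1}E_{l_j,\chi_j,\psi_j},
\]
with Dirichlet characters $\chi_j,\psi_j$ of conductor dividing $N$, so that \eqref{eqn:EisensteinSeriesDefinition} gives, for every prime $p\nmid N$,
\[
c_f(p)=2\sum_j a_j\,p^{s_j-1}\bigl(\chi_j(p)+\psi_j(p)\,p^{l_j-1}\bigr).
\]

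Next, for each $n\in\mathcal{V}_{m,M}$, I would restrict attention to primes $p\equiv m\pmod{M}$ that also satisfy $p\equiv n\pmod{N}$. On this joint residue class the character values $\chi_j(p)=\chi_j(n)$ and $\psi_j(p)=\psi_j(n)$ reduce to constants, so that
\[
c_f(p)=P_n(p),\qquad P_n(X):=\sum_j a_j\bigl(\chi_j(n)X^{s_j-1}+\psi_j(n)X^{s_j+l_j-2}\bigr)\in\C[X]
\]
is a polynomial in $p$ of degree at most $k_{\max}-1$, where $k_{\max}:=\max_j(l_j+s_j-1)$. A standard Vandermonde argument then gives that if $P_n$ vanishes at $k_{\max}$ distinct primes of this joint class then $P_n\equiv 0$, whence $c_f(p)=0$ for every prime $p\nmid N$ in the joint class. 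Consequently $f\in\widetilde{\Omega}_{m,M}(\Gamma_1(N))$ as soon as $P_n\equiv 0$ holds for every $n\in\mathcal{V}_{m,M}$.

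The hard part will be extracting a single integer $r=r(f)$ that forces this conclusion independently of which primes $p_1,\ldots,p_r$ are supplied, since a priori the $p_i$ could concentrate in joint classes whose $P_n$ already vanishes identically, giving no information about the remaining classes. Mirroring the strategy of \cite{KKL1}, I would combine the per-class Vandermonde bound with the finite-dimensionality of the ambient Eisenstein space of bounded weight, together with the rigidity afforded by Theorem \ref{thm:VanishPrimesArithmetic} and the explicit basis of Theorem \ref{thm:Basis}: within that structure the polynomials $\{P_n\}_{n\in\mathcal{V}_{m,M}}$ are not independent, but are coupled through the $H_{k,\ell,\chi,\psi}$-relations, so that $P_{n_0}\equiv 0$ for a single $n_0\in\mathcal{V}_{m,M}$ propagates to $P_n\equiv 0$ for every other $n\in\mathcal{V}_{m,M}$. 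Threading this rigidity through the per-class Vandermonde step then yields an admissible value of $r$ depending only on $k_{\max}$ and $|\mathcal{V}_{m,M}|$, for instance $r=k_{\max}\cdot|\mathcal{V}_{m,M}|+1$.
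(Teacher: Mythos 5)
Your opening reduction --- writing $c_f(p)$ as a polynomial $P_n(p)$ on each joint class $(m+M\Z)\cap(n+N\Z)$ and killing it with a Vandermonde system --- is the same mechanism as the paper's proof, which expands $f$ in the basis $D^{\ell}E_{k,\chi,\psi}|V_{\delta}$, writes $c_f(p)=\sum_{r}\beta_r(\chi(m),\psi(m))p^r$ for primes $p\equiv m\pmod{M}$ as in Lemma \ref{lem:EisensteinArithmetic}, and concludes from a single Vandermonde system once $\max(\ell+k)+1$ such primes are supplied. If anything you are more careful than the paper here, since you track the fact that $\chi(p),\psi(p)$ are only constant on residue classes modulo $N$, not modulo $M$.

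The gap is in your gluing step. The claim that $P_{n_0}\equiv 0$ for a single $n_0\in\mathcal{V}_{m,M}$ ``propagates'' to $P_n\equiv 0$ for all $n\in\mathcal{V}_{m,M}$ is false, and it cannot be repaired by appealing to Theorems \ref{thm:VanishPrimesArithmetic} and \ref{thm:Basis}: those results describe forms \emph{already known} to lie in $\widetilde{\Omega}_{m,M}$, so using them to place $f$ in $\widetilde{\Omega}_{m,M}$ is circular. Concretely, let $\chi$ be the quadratic character modulo $3$, let $k\geqslant 4$ be even, and set $f:=E_{k,\mathbf{1},\mathbf{1}}-E_{k,\chi,\overline{\chi}}\in\mathcal{E}(\Gamma_1(9))$ in the normalization \eqref{eqn:EisensteinSeriesDefinition}; then $c_f(p)=2\left(1+p^{k-1}\right)\left(1-\chi(p)\right)$ vanishes for \emph{every} prime $p\equiv 1\pmod{3}$ but is nonzero for every prime $p\equiv 2\pmod{3}$. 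Taking, say, $M=4$ and $m=1$, one can hand your argument arbitrarily many primes $p_i\equiv 1\pmod{4}$ with $c_f(p_i)=0$ (choose them all $\equiv 1\pmod{3}$), yet $f\notin\widetilde{\Omega}_{1,4}(\Gamma_1(9))$; so no value of $r$, in particular not $k_{\max}\cdot|\mathcal{V}_{m,M}|+1$, can work if the test primes are allowed to concentrate in one joint class. To close the gap you must either require the $p_i$ to be distributed so that each class $(m+M\Z)\cap(n+N\Z)$ with $n\in\mathcal{V}_{m,M}$ receives at least $k_{\max}$ of them (after which your per-class Vandermonde argument finishes the proof), or restrict to the situation the paper's own proof tacitly assumes, namely that the characters occurring in $f$ are characters modulo $M$, so that $c_f(p)$ is a single polynomial in $p$ on the whole progression $m\pmod{M}$ and one Vandermonde system suffices.
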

As a corollary of (the proof of) Theorem \ref{thm:FiniteCheckAP} and Theorem \ref{thm:VanishPrimesArithmetic}, one concludes that forms that strongly detect arithmetic progressions in the $N=1$ case actually strongly detect all primes.
    \begin{cor}\label{cor:N=1}
Suppose that $M\in\N$, and $m\in\Z$ with $\gcd(m,M)=1$. If $f\in \widetilde{\Omega}_{m,M}(\SL_2(\Z))$, then $f\in \widetilde{\Omega}$. In particular, level $1$ forms cannot strongly detect primes in an arithmetic progression unless they in fact detect all the primes, that is $M=1$.
    \end{cor}
\begin{rem}
In order to obtain Corollary \ref{cor:N=1}, we combine the techniques involving sign changes from this paper and \cite{KKL1} with the techniques from \cite{BvIMOS} involving $\ell$-adic Galois representations. We use the techniques from \cite{BvIMOS} to establish that certain cusp forms must vanish (see Lemma \ref{lem:SieveVanish}). If one makes the stronger assumption that the coefficient $c_f(n)$ are all nonnegative, then one can show that the desired cusp form vanishes with sign-change techniques, obtaining a weaker version of Corollary \ref{cor:N=1}.
\end{rem}

\begin{rem}
In Theorem \ref{thm:VanishPrimesArithmetic} (1), it is shown that if $f$ detects primes in the arithmetic progression congruent to $m$ modulo $M$, then the Eisenstein series part detects primes in any congruence class modulo $N$ that intersects with the arithmetic progression. However, the cuspidal part is not shown there to vanish in the entire congruence classes modulo $N$ (only in the intersection with the original arithmetic progression). In light of Corollary \ref{cor:N=1}, it is likely that any form that detects all primes congruent to $m$ modulo $M$ also detects all primes in congruence classes $n$ modulo $N$ for $n\in\mathcal{V}_{m,M}$. This requires a more nuanced version of Lemma \ref{lem:SieveVanish} that shows that a level $N$ quasi-modular cusp form $g$ vanishes if $g|S_{M,m}=0$ for some $M$ relatively prime to $N$. The assumption that $M$ is relatively prime to $N$ in this possible extension of Lemma \ref{lem:SieveVanish} is certainly necessary, as CM forms can vanish in arithmetic progressions modulo $N$, for example.
\end{rem}

    We conclude this paper by providing, in an appendix, a purely analytic proof of some of results for the level one case. In particular, we give an equivalent condition (see Theorem \ref{thm:Equivalence} below) in terms of certain ratios of the Riemann zeta function for the linear combination of certain divisor functions to detect primes. This can be used to easily produce examples of such identities which in turn can be translated to prime-detecting identities involving partition functions. As the notation and results leading up to the proof of Theorem \ref{thm:Equivalence} are self-contained and somewhat independent of the rest of the paper, we have given the details in a separate section.
    
    The paper is organized as follows. In Section \ref{sec:signchanges} we show that a quasi-modular cusp form not in the kernel of the sieving operator exhibits infinitely many sign changes at the prime Fourier coefficients. In Section \ref{sec:arithmetic}, we show that the corresponding Eisenstein series exhibit at most finitely many sign changes and conclude the proof of Theorem \ref{thm:VanishPrimesArithmetic}. In Sections \ref{sec:Basis} and \ref{sec:FiniteCheckAP} we prove Theorems \ref{thm:Basis} and \ref{thm:FiniteCheckAP} (together with Corollary \ref{cor:N=1}) respectively.

    \subsection*{Acknowledgements}
    The authors would like to thank Ken Ono and Jan-Willem van Ittersum for their comments on a previous draft of this manuscript. The research was conducted during the conference HKU Number Theory Days 2025. The authors thank the Department of Mathematics at HKU and the Institute of Mathematical Research at HKU for supporting the conference and hosting the second author.
The research of the first author was supported by grants from the Research Grants Council of the Hong Kong SAR, China (project numbers HKU 17314122, HKU 17305923). The third author is supported by GRF (No. 17317822) and NSFC (No. 12271458). The authors would like to thank the anonymous referees for their comments and suggestions.

\section{Sign changes of quasi-modular cusp forms}\label{sec:signchanges}

    We let $\mathcal{S}(\Gamma_1(N))$ denote the space of quasi-modular cusp forms of level $N$ (i.e., the space spanned by cusp forms and their derivatives), and omit $\Gamma_1(N)$ in the notation if $N=1$ (i.e., $\mathcal{S}:=\mathcal{S}(\Gamma_1(1))$).
    
    In this section, we show that Fourier coefficients of quasi-modular cusp forms (of arbitrary level) exhibit sign changes at the primes. First we recall the main sign change lemma (for the full level case) from \cite{KKL1}. 
    
\begin{lem}\label{lem:signchanges}
Suppose that $F\in \mathcal{S}$ has a Fourier expansion 
\[
F(\tau)=\sum_{n\geqslant 1}c_{F}(n)q^n
\]
with $c_{F}(n)\in\R$. If $F\neq 0$, then the sequence $c_{F}(p)$, running over $p$ prime, has infinitely many sign changes. 
\end{lem}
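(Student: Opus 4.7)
The plan is to argue by contradiction. Suppose that $F \ne 0$ has only finitely many sign changes in the sequence $\{c_{F}(p)\}_{p \text{ prime}}$; replacing $F$ with $-F$ if necessary, we may assume $c_{F}(p) \ge 0$ for all sufficiently large primes $p$. I will compare the partial sum $\sum_{p \le x} c_{F}(p)$ against a PNT-type upper bound coming from the individual $L$-functions attached to the cuspidal components of $F$, and a Rankin--Selberg lower bound coming from $\sum_{p \le x} c_{F}(p)^{2}$; these estimates will turn out to be incompatible.

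First I reduce to the case that $F$ is homogeneous of some weight $k$: writing $F = \sum_{k'} F_{k'}$, the top-weight component dominates $c_{F}(p)$ in absolute value by the Deligne bound, so if its prime-indexed coefficients have infinitely many sign changes then so do those of $F$. Using the structural decomposition of quasi-modular cusp forms on $\mathrm{SL}_{2}(\Z)$, I write $F = \sum_{j=0}^{s} D^{j} f_{j}$ with each $f_{j} \in S_{k-2j}(\mathrm{SL}_{2}(\Z))$ a classical cusp form, at least one $f_{j} \not\equiv 0$. Normalizing by $\lambda_{f_{j}}(p) := c_{f_{j}}(p)\,p^{-(k-2j-1)/2}$ (so $|\lambda_{f_{j}}(p)| = O(1)$ by Deligne) gives $c_{F}(p) = p^{(k-1)/2} \sum_{j} \lambda_{f_{j}}(p)$.

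For the lower bound on $\sum_{p \le x} c_{F}(p)^{2}$, I expand
\[
c_{F}(p)^{2} = p^{k-1} \sum_{i,j} \lambda_{f_{i}}(p)\,\overline{\lambda_{f_{j}}(p)}.
\]
The diagonal terms are controlled by the Rankin--Selberg $L$-function $L(s,f_{j} \otimes \bar{f}_{j})$, whose simple pole at the edge of absolute convergence yields $\sum_{p \le x} |\lambda_{f_{j}}(p)|^{2} \sim C_{j}\,\pi(x)$ with $C_{j} > 0$ whenever $f_{j} \not\equiv 0$. For $i \ne j$, the forms $f_{i}$ and $f_{j}$ live in cuspidal spaces of distinct weights, so after decomposing into newforms, no pairing contributes a Rankin--Selberg $L$-function with a pole at the edge, and the standard PNT for such $L$-functions yields $\sum_{p \le x} \lambda_{f_{i}}(p)\,\overline{\lambda_{f_{j}}(p)} = o(\pi(x))$. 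Partial summation against $p^{k-1}$ gives $\sum_{p \le x} c_{F}(p)^{2} \gg x^{k}/\log x$. Using the crude bound $c_{F}(p)^{2} \le C\,p^{(k-1)/2}\,|c_{F}(p)|$ (again Deligne) and a further partial summation yields
\[
\sum_{p \le x} |c_{F}(p)| \;\gg\; \frac{x^{(k+1)/2}}{\log x}.
\]

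For the competing upper bound, the PNT for each $L(s, f_{j})$ gives $\sum_{p \le x} \lambda_{f_{j}}(p) = o(\pi(x))$, so partial summation yields $\sum_{p \le x} c_{F}(p) = o\!\left(x^{(k+1)/2}/\log x\right)$. Under the sign-change hypothesis, $|c_{F}(p)| = c_{F}(p)$ for large $p$, so $\sum_{p \le x} |c_{F}(p)|$ and $\sum_{p \le x} c_{F}(p)$ agree up to $O(1)$, contradicting the preceding lower bound. The main obstacle in executing this plan is cleanly handling the off-diagonal Rankin--Selberg terms; decomposing each $f_{j}$ into level-$1$ Hecke newforms of weight $k-2j$ and using that newforms of distinct weights are never proportional reduces this to the standard PNT for entire Rankin--Selberg $L$-functions, which in level $1$ is classical and requires no CM hypothesis.
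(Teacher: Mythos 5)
Your core argument is the same first-moment/second-moment comparison that the paper (following \cite{KKL1}) uses in the proof of Lemma~\ref{lem:signchangesGamma1(N)}: the prime number theorem for the standard $L$-functions gives $\sum_{p\le x} c_F(p) = o\big(x^{(k+1)/2}/\log x\big)$, Rankin--Selberg theory together with the entirety of the off-diagonal convolutions gives $\sum_{p\le x} c_F(p)^2 \gg x^{k}/\log x$, and Deligne's pointwise bound converts the second moment into $\sum_{p\le x}|c_F(p)|\gg x^{(k+1)/2}/\log x$, which is incompatible with the coefficients having eventually constant sign. That part is correct, and at level one the off-diagonal terms are handled exactly as you say (distinct newforms, entire Rankin--Selberg $L$-functions, no CM caveat needed).

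The one step that does not hold up as written is the reduction to a homogeneous weight. You justify it by saying that the top-weight component dominates $c_F(p)$ in absolute value ``by the Deligne bound,'' but Deligne gives only the upper bound $|c_{F_k}(p)|\ll p^{(k-1)/2}$ on the top component $F_k$; it gives no pointwise lower bound, so at an individual prime $c_{F_k}(p)$ may be tiny and the lower-weight components may control the sign of $c_F(p)$. Hence ``$F_k$ has infinitely many sign changes'' does not formally imply the same for $F$. The gap is repairable with estimates you already have: the two-moment comparison in fact shows that the positive part $\sum_{p\le x,\,c_{F_k}(p)>0}c_{F_k}(p)$ and the negative part $-\sum_{p\le x,\,c_{F_k}(p)<0}c_{F_k}(p)$ are each $\gg x^{(k+1)/2}/\log x$, while the primes with $|c_{F_k}(p)|\le p^{(k-2)/2}$ contribute only $O\big(x^{k/2}\big)=o\big(x^{(k+1)/2}/\log x\big)$ to these sums; so each sign is attained at infinitely many primes where $|c_{F_k}(p)|$ exceeds the $O\big(p^{(k-3)/2}\big)$ contribution of the lower-weight components, and at such primes $\sgn\big(c_F(p)\big)=\sgn\big(c_{F_k}(p)\big)$. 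Alternatively---and this is what the paper does---skip the reduction entirely and run the argument on all of $F$, letting $\alpha_0=\max_f\{j_f+(k_f+1)/2\}$ record the dominant exponent, so that the lower-weight pieces are absorbed into the error terms automatically.
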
 

In higher levels, we have the following lemma.

\begin{lem}\label{lem:signchangesGamma1(N)}
    Suppose that $F\in \mathcal{S}(\Gamma_1(N))$ has a Fourier expansion 
    \[
    F(\tau)=\sum_{n\geqslant 1}c_{F}(n)q^n
    \]
    with $c_{F}(n)\in\R$.  If $F\neq 0$, then either $c_F(n)=0$ for all $n$ satisfying $(n,N)=1$ or $\{c_F(p)\}$ has infinitely many sign changes, as $p$ runs through the primes. 
\end{lem}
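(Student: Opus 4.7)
The plan is to adapt the analytic proof of Lemma \ref{lem:signchanges} to level $N$, carefully restricting to primes coprime to $N$. By the definition of $\mathcal{S}(\Gamma_1(N))$, we can write
\[
F = \sum_{j=0}^{d} D^j G_j, \qquad G_j \in S_{w-2j}(\Gamma_1(N)),
\]
where $w$ denotes the weight of $F$, so that $c_F(p) = \sum_{j=0}^{d} p^j c_{G_j}(p)$ for every prime $p$. Let $F^\flat$ denote the projection of $F$ onto Fourier coefficients indexed by $n$ with $\gcd(n,N)=1$; the hypothesis amounts to $F^\flat \neq 0$, and since $c_F(p) = c_{F^\flat}(p)$ for every $p \nmid N$, it suffices to exhibit infinitely many sign changes among $\{c_F(p)\}_{p \nmid N}$.

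Suppose for contradiction that $c_F(p) \ge 0$ for all sufficiently large primes $p \nmid N$ (the opposite-sign case is symmetric). We combine two analytic inputs. First, the prime number theorem for the cusp-form $L$-functions $L(G_j, s)$ (a standard consequence of Hadamard-de la Vall\'ee Poussin zero-free regions) gives
\[
\sum_{p \leq x} c_{G_j}(p) = o\!\left(\frac{x^{(w_j+1)/2}}{\log x}\right), \qquad w_j := w - 2j,
\]
a bound that persists after removing the finitely many primes $p \mid N$. Partial summation against $p^j$, together with the identity $j + (w_j+1)/2 = (w+1)/2$, then yields
\[
\sum_{\substack{p \leq x \\ (p,N)=1}} c_F(p) = o\!\left(\frac{x^{(w+1)/2}}{\log x}\right).
\]
Second, expanding each $G_j$ via Atkin-Lehner newform theory at levels $M \mid N$ and applying the prime number theorem for Rankin-Selberg $L$-functions to the associated convolutions, one obtains
\[
\sum_{\substack{p \leq x \\ (p,N)=1}} c_F(p)^2 \sim C_F \cdot \frac{x^w}{\log x}
\]
for some constant $C_F > 0$. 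Deligne's bound $|c_F(p)| \leq C\, p^{(w-1)/2}$ combined with the nonnegativity assumption gives $c_F(p)^2 \leq C\, p^{(w-1)/2} c_F(p)$ for $p$ large, whence
\[
\sum_{\substack{p \leq x \\ (p,N)=1}} c_F(p)^2 \leq C x^{(w-1)/2}\!\!\! \sum_{\substack{p \leq x \\ (p,N)=1}}\!\!\! c_F(p) = o\!\left(\frac{x^w}{\log x}\right),
\]
contradicting the Rankin-Selberg asymptotic.

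The main obstacle is to verify that $C_F > 0$ whenever $F^\flat \neq 0$. Expanding each $G_j$ in a basis of newforms at levels $M \mid N$, the oldform pieces $f|V_d$ with $d > 1$ are supported on multiples of $d$ and hence killed by the projection onto $\gcd(n,N)=1$; only the newform components themselves contribute to $F^\flat$. Writing the resulting expansion $c_F(p) = \sum_{j,\alpha} \beta_{j,\alpha}\, p^j c_{f_\alpha}(p)$ for $p \nmid N$ and squaring, the diagonal terms $\beta_{j,\alpha}^2\, p^{2j} c_{f_\alpha}(p)^2$ each contribute a simple pole of $L(f_\alpha \times \overline{f_\alpha}, s - 2j)$ at the common location $s = w$ with positive residue, while off-diagonal pairs $(j,\alpha) \neq (k,\beta)$ are holomorphic there by the standard Rankin-Selberg pole criterion (pairs of distinct newforms, and pairs of newforms of unequal weight when $j \neq k$, give no pole). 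Hence $C_F$ is a positive-definite quadratic form in the newform coefficients $\beta_{j,\alpha}$ of $F^\flat$, and so $C_F > 0$ precisely when $F^\flat \neq 0$, completing the argument.
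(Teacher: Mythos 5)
Your argument is correct and follows essentially the same route as the paper: decompose into Hecke eigenforms, note that the oldform components are annihilated on coefficients coprime to $N$ (so only genuine newform data survives in $F^\flat$), and then run the first-moment (prime number theorem for $L$-functions), second-moment (Rankin--Selberg/Selberg orthogonality) and Deligne-bound comparison to force infinitely many sign changes. The one caveat is that you implicitly take $F$ homogeneous of weight $w$, so that every term $D^jG_j$ contributes at the common exponent $(w+1)/2$; a general element of $\mathcal{S}(\Gamma_1(N))$ may mix weights, and one should instead work with $\alpha_0:=\max\bigl(j+\tfrac{w_j+1}{2}\bigr)$ over the components actually present, as the paper does, after which your argument goes through verbatim.
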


\begin{proof}
    Suppose that $F\neq 0$. We may express a quasi-modular cusp form $F$ as the linear combination of Hecke eigenforms and their derivatives;
    \[
    F= \sum_{f} \sum_j A_{f}(j) f^{(j)}
    \]
    where $A_f(j)\in \mathbb{C}$ and $f^{(j)}=D^j f$ is the normalized $j$-th derivative of $f$. We break the sum over $f$ based on the image of the $V$ map. Observe from \cite[Lemma 4]{Li} that it suffices to restrict ourselves to those $d$ which divide $N$. Hence, we write
    \[
    F = \sum_{d|N} G_d
    \]
    where
    \[
    G_d= \sum_{f} \sum_{j} A_f(j) f^{(j)} = \sum_{n=1}^{\infty} c_{G_d}(n) q^{nd}
    \]
    where the $f$ sum is restricted to those Hecke eigenforms in the image of $V_d$ operator.

    From construction, it follows that $c_F(dp)=\sum_{\ell|d} c_{G_\ell}(dp/\ell)$ for almost all primes $p$. In particular, $c_F(p) = c_{G_1}(p)$ for almost all primes $p$ as $c_{G_d}(p) =0$ for every $d>1$ and any prime $p>N$. If $G_1=0$, then  $c_F(n)=0$ for all $(n,N)=1$, because for all $1<d|N$, $G_d$ contributes a coefficient of $0$ to the term $q^n$ whenever $(n,N)=1$. Therefore the lemma would follow if we prove the infinite of sign changes in $c_{G_1}(p)$. This reduces the proof of the lemma to the case of $d=1$.

    When $d=1$, we may, without loss of generality, suppose that $F=G_1$. In this case, the proof is very similar to the proof of Lemma \ref{lem:signchanges}. We briefly sketch the arguments for the sake of completeness and refer the reader to \cite{KKL1} for more details. 

    After some rearrangement, we may write 
    \[
    c_F(p) = \sum_f \sum_j A_{f}(j) p^j c_f(p) = \sum_{f} P_{f}(p) c_f(p)
    \]
    for some polynomials $P_{f}(x)\in \C[x]$. For every $f$, we let the weight of $f$ be $k_{f}\geqslant 1$ 
    and the degree of $P_{f}$ to be $j_{f} \geqslant 0$. Denote the leading coefficient of $P_{f}$ as simply $A_{f}$. From the Ramanujan bound for $a_{f}$ \cite{Deligne1, Deligne2}, we know
    \[
    P_{f}(p)c_f(p) = A_f p^{j_f} c_f(p) + O\left(p^{j_{f} + \frac{k_{f}-1}{2} -1}\right),
    \]
    where the implied constant depends at most on $F$.

    Set $\alpha_0:= \max_{f}\left\{\alpha_f := j_f + \frac{k_f +1}{2}\right\}$. Summing over the primes, and using the prime number theorem in this setting (see \cite[Theorem 5.13]{IwaniecKowalski}) gives us
    \begin{equation}\label{eqm}
    \sum_{p\leqslant x} c_F(p) = o\left(\frac{x^{\alpha_0}}{\log(x)}\right).
    \end{equation}

    Similarly,
    \begin{eqnarray*}
    |c_F(p)|^2 
    &=& \sum_{f,g} P_f(p) \overline{P_g(p)} c_f(p)\overline{c_g(p)} \\
    &=& \sum_f |P_f(p)|^2 |c_f(p)|^2 + \sum_{f\neq g} P_f(p) \overline{P_g(p)} c_f(p)\overline{c_g(p)}.     
    \end{eqnarray*}

    Appealing to Rankin-Selberg theory and the Selberg orthogonality conjecture (a theorem in this case) \cite[Corollary 1.5]{LY}, the sum over $p$ of $|c_F(p)|^2$ is dominated by the diagonal term. In particular the leading order of the asymptotic is obtained from the forms of the largest weight. Plugging this all in we get,
    \begin{align}\label{eqms}
    \sum_{p\leqslant x} |c_F(p)|^2 \gg_{F} \frac{x^{\beta_{0}}}{\log(x)}
    \end{align}
    where $\beta_0 = 2\alpha_0-1$.
    It follows from Deligne's bound that for all prime $p$, 
    \[
    |c_F(p)|\leqslant \sum_{f} |P_{f}(p) c_f(p)| \leqslant \sum_{f} \|P_f\| p^{j_f}|c_f(p)| \leqslant C_F p^{\alpha_0-1}
    \]
    where $\|P\|=\sum_{r=0}^{m} |A_r|$ if $P(x)=\sum_{r=0}^m A_rx^r\in \mathbb{C}[x]$ and $C_F>0$ is a constant.  This yields
    \[
    \sum_{p\leqslant y} |c_F(p)|^2 = O\left(\frac{y^{2\alpha_0-1}}{\log(y)}\right).
    \]
    To complete the proof, suppose, for the sake of contradiction, that there are only finitely many sign changes in the sequence $\{c_{F}(p)\}$. Without loss of generality we may suppose that $c_{F}(p)$ is positive for all $p > y$. Then 
    \[
    \sum_{p\leqslant x} |c_{F}(p)|^2 \leqslant C_{F} x^{\alpha_{0}-1} \sum_{y \leqslant p\leqslant x} c_{F}(p) + O\left(\frac{y^{2\alpha_0-1}}{\log(y)}\right) = o \left(\frac{x^{2\alpha_{0}-1}}{\log(x)}\right).
    \]
    as $x\to\infty$ and with $y$ fixed. This is a contradiction to \eqref{eqms} as $\beta_0 = 2\alpha_0-1$. 
    This completes the proof.
\end{proof}

\begin{cor}\label{lem:signchangesarithmetic}
Let $N\in\N$ be given and suppose that the Fourier expansion 
\[
f(\tau)=\sum_{l\geqslant 1}c_f(l)q^l
\]
of $f\in\mathcal{S}(\Gamma_1(N))$ has real coefficients $c_f(l)\in\R$. If $M\in\N$ and $m\in\Z$ with $\gcd(m,M)=1$, then for any $n\in \Z$ with $\gcd(n,N)=1$, either $f\in \ker(S_{M,m}\circ S_{N,n})$ or $\{c_f(p)\}_{p\equiv m\pmod{M},\, p\equiv n\pmod{N}}$ has infinitely many sign changes.
\end{cor}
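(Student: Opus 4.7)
The strategy is to reduce to Lemma \ref{lem:signchangesGamma1(N)} by first passing to the sieved form. Set
\[
g \;:=\; f\,\big|\,S_{M,m}\,\big|\,S_{N,n}.
\]
By the discussion preceding Theorem \ref{thm:VanishPrimesArithmetic}, $g$ is a quasi-modular form of level $L:=\lcm(N,M^2,MN)$. Moreover, since $\gcd(m,M)=1$, the sieving operator $S_{M,m}$ can be written as a finite linear combination of twists by Dirichlet characters modulo $M$ (via $f|S_{M,m}=\frac{1}{\varphi(M)}\sum_{\chi\bmod M}\overline{\chi(m)}\,(f\otimes\chi)$), and analogously for $S_{N,n}$. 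Since twisting by a Dirichlet character sends $\mathcal{S}(\Gamma_1(N))$ into a space of quasi-modular cusp forms of higher level, we have $g\in\mathcal{S}(\Gamma_1(L))$.

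If $g=0$, then by definition $f\in\ker(S_{M,m}\circ S_{N,n})$ and the corollary holds in its first alternative. Suppose therefore that $g\neq 0$. By construction the support of $g$ lies inside the set of integers $l$ satisfying $l\equiv m\pmod{M}$ and $l\equiv n\pmod{N}$, and on this set $c_g(l)=c_f(l)$. The coprimality hypotheses $\gcd(m,M)=1$ and $\gcd(n,N)=1$ force every such $l$ to satisfy $\gcd(l,MN)=1$, and since the primes dividing $L$ are exactly those dividing $MN$, every $l$ in the support of $g$ in fact satisfies $\gcd(l,L)=1$. Consequently $g$ has at least one nonzero Fourier coefficient indexed by an integer coprime to $L$.

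Now apply Lemma \ref{lem:signchangesGamma1(N)} to $g\in\mathcal{S}(\Gamma_1(L))$. The first alternative of that lemma is excluded by the previous paragraph, so $\{c_g(p)\}$ has infinitely many sign changes as $p$ ranges over the primes. But $c_g(p)=0$ for every prime $p$ outside the joint arithmetic progression $p\equiv m\pmod{M}$, $p\equiv n\pmod{N}$, so all of these sign changes must already occur among the primes in that progression, and for such primes $c_g(p)=c_f(p)$. This yields infinitely many sign changes of $\{c_f(p)\}_{p\equiv m(M),\,p\equiv n(N)}$, which is the desired second alternative.

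The only step that requires care is the preservation of cuspidality under the sieving operators, since the paper only records preservation of quasi-modularity. This is handled by the twist identity above, combined with the standard fact that the twist $f\otimes\chi$ of a quasi-modular cusp form is again a quasi-modular cusp form (of level dividing $N\cdot\mathrm{cond}(\chi)^2$). No other step presents a substantive difficulty: the reduction to Lemma \ref{lem:signchangesGamma1(N)} is entirely formal once one checks that the sieved form still lives in the space to which that lemma applies and still has a coefficient at an index coprime to the level.
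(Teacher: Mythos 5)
Your proof is correct and follows essentially the same route as the paper: pass to $F:=f|S_{M,m}|S_{N,n}$, note it is a quasi-modular cusp form of some level $L$ whose prime divisors divide $MN$, observe that the coprimality hypotheses force the support of $F$ to consist of integers coprime to $L$, and then invoke Lemma \ref{lem:signchangesGamma1(N)} to conclude. Your extra justification of cuspidality preservation via character twists is a reasonable way to fill in a step the paper leaves implicit; the only cosmetic difference is the exact level you assign to the sieved form (the paper records $L\mid M^4N^2$ after both sievings), which does not affect the argument.
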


\begin{proof}

The form $F:=f|S_{M,m}|S_{N,n}$ is an element of $\mathcal{S}(\Gamma_1(L))$ for some $L|M^4 N^2$. By Lemma \ref{lem:signchangesGamma1(N)}, either $c_F(l)=0$ for all but finitely many  $(l,L)=1$ or $\{c_F(p)\}$ has infinitely many sign changes. As the Fourier coefficients are supported on (a subset of) $(m+M\Z)\cap (n+N\Z)$, all integers in the support are coprime with $L$. The desired result follows.
\end{proof}


\section{Vanishing at primes in arithmetic progressions}\label{sec:arithmetic}

We begin with a lemma similar to \cite[Lemma 4.1]{KKL1}.
\begin{lem}\label{lem:EisensteinArithmetic}
Suppose that $f\in \mathcal{E}(\Gamma_1(N))$ with real Fourier coefficients. Then for every arithmetic progression $n\pmod{N}$ with $\gcd(n,N)=1$ there exists $\varepsilon_{n,N}\in \{-1,0,1\}$ for which every sufficiently large prime $p\equiv n\pmod{N}$ 
\[
\sgn(c_f(p))=\varepsilon_{n,N}.
\]
Moreover, we have $\varepsilon_{n,N}=0$ if and only if $c_f(p)=0$ for all $p\equiv n\pmod{N}$.
\end{lem}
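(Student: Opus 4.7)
The plan is to exhibit $c_{f}(p)$, for primes $p$ in a fixed residue class $m\pmod{N}$, as the value at $p$ of a polynomial $P\in\R[x]$ depending on $m$, and then read off the eventual sign from the leading coefficient of $P$. To set this up, I would decompose $f$ as a finite $\C$-linear combination of generators of $\mathcal{E}(\Gamma_{1}(N))$: namely derivatives $D^{j}E_{k,\chi,\psi}$ of the modular Eisenstein series in \eqref{eqn:EisensteinSeriesDefinition} (with $\chi\psi$ a Dirichlet character of conductor dividing $N$ and $\chi(-1)\psi(-1)=(-1)^{k}$), together with derivatives $D^{j}E_{2}$ and $D^{j}\bigl(E_{2}(\tau)-dE_{2}(d\tau)\bigr)$ for $d\mid N$, $d>1$, which cover the weight-$2$ Eisenstein pieces. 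Label the coefficients in this decomposition by $A_{j,k,\chi,\psi}$, etc.

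For the prime Fourier coefficient, use $\sigma_{k-1}^{\chi,\psi}(p)=\chi(p)+\psi(p)p^{k-1}$ for $p\nmid N$ together with the fact that $D^{j}$ multiplies the $n$-th Fourier coefficient by $n^{j}$: the generator $D^{j}E_{k,\chi,\psi}$ then contributes $2p^{j}\bigl(\chi(p)+\psi(p)p^{k-1}\bigr)$ to $c_{f}(p)$, while $D^{j}E_{2}$ and $D^{j}\bigl(E_{2}(\tau)-dE_{2}(d\tau)\bigr)$ each contribute $-24p^{j}(1+p)$ for $p$ coprime to $dN$. Now restrict to primes $p\equiv m\pmod{N}$ with $p\nmid N$: every Dirichlet character $\chi$ of conductor dividing $N$ satisfies $\chi(p)=\chi(m)$, a constant depending only on $m$ and $\chi$. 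Collecting terms of equal $p$-degree converts the formula for $c_{f}(p)$ into a polynomial identity
\[
c_{f}(p)\;=\;P(p),\qquad P(x)\;=\;\sum_{e\geqslant 0}B_{e}(m)\,x^{e}\;\in\;\C[x],
\]
valid for all but finitely many primes in the progression, where the coefficients $B_{e}(m)$ are explicit $\C$-linear combinations of the $A_{j,k,\chi,\psi}$ weighted by the scalars $\chi(m),\psi(m)$.

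By Dirichlet's theorem there are infinitely many primes $p\equiv m\pmod{N}$, and since $c_{f}(p)\in\R$ for each of them, the polynomial $\im P\in\R[x]$ vanishes at infinitely many real arguments and is therefore identically zero. Hence $B_{e}(m)\in\R$ for every $e$. If $P$ is the zero polynomial we take $\varepsilon_{m,N}=0$; otherwise the leading term $B_{e_{0}}(m)\,p^{e_{0}}$ with $e_{0}=\deg P$ dominates as $p\to\infty$, so $\sgn c_{f}(p)=\sgn B_{e_{0}}(m)$ for all sufficiently large primes $p\equiv m\pmod{N}$.

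No step is genuinely difficult. The only point requiring care is the weight-$2$ quasi-modular contribution, since $E_{2}$ itself is not modular; but the explicit Fourier expansion of $E_{2}$ and of its level-$N$ modifications $E_{2}(\tau)-dE_{2}(d\tau)$ fits the same polynomial-in-$p$ framework and causes no obstacle. The only ``analytic'' input is Dirichlet's theorem, used solely to guarantee that $P$ is determined by its values at primes in the progression.
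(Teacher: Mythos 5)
Your proposal is correct and follows essentially the same route as the paper: expand $f$ in the standard Eisenstein generators, use $\sigma_{k-1}^{\chi,\psi}(p)=\chi(p)+\psi(p)p^{k-1}$ and the fact that $D^{j}$ scales the $p$-th coefficient by $p^{j}$ to write $c_{f}(p)$ as a polynomial in $p$ whose coefficients depend only on $\chi(m),\psi(m)$, and read off the eventual sign from the leading nonzero coefficient (with $\varepsilon_{m,N}=0$ if the polynomial vanishes identically). The only cosmetic difference is that the paper's spanning set carries $V_{\delta}$-images in all weights, not just weight $2$; since these contribute nothing at primes $p\nmid N$, your argument is unaffected.
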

\begin{proof}
The basis elements of $\mathcal{E}(\Gamma_1(N))$ are given by $D^{\ell}E_{k,\chi,\psi}|V_{\delta}$, where $\chi$ and $\psi$ are Dirichlet characters modulo $N$.
Suppose $f\in \mathcal{E}(\Gamma_1(N))$. Let us denote the $j$-th Fourier coefficient of $f$ as $c_{f}(j)$.
For $j=p$ prime, expressing $f$ as a linear combination of the basis above, and using \eqref{eqn:DivisorFunctionDefn}, we see that the $p$-th coefficient can be written as a polynomial 
\[
c_f(p)=\sum_{r} \beta_r(\chi(p),\psi(p)) p^r,
\]
where the coefficients $\beta_r(\chi(p),\psi(p))\in\C$ only depend on $f$, $\chi(p)$ and $\psi(p)$. Since $\chi$ and $\psi$ are characters modulo $N$, for $p\equiv n\pmod{N}$ we have $\chi(p)=\chi(n)$ and $\psi(p) = \psi(n)$, so 
\[
c_f(p)=\sum_{r} \beta_r(\chi(n),\psi(n)) p^r
\]
is a polynomial in $p$ whose coefficients only depend on $f$ and $n$. If this polynomial vanishes identically, then we may take $\varepsilon_{n,N}:=0$, and otherwise we may choose $r_0$ largest so that $\beta_{r_0}(\chi(n),\psi(n))\neq 0$, in which case we may choose 
\[
\varepsilon_{n,N}:=\sgn\left(\beta_{r_0}(\chi(n),\psi(n))\right)\in\{\pm 1\}.
\]
The fact that $\beta_{r_0}(\chi(m),\psi(m))\in \mathbb{R}$ follows from the assumption that the Fourier coefficients of $f$ are real.
\end{proof} 

Theorem \ref{thm:VanishPrimesArithmetic} now follows by an argument similar to the proof of Theorem \ref{thm:KKL}.
\begin{proof}[Proof of Theorem \ref{thm:VanishPrimesArithmetic}]
Suppose that $f\in \widetilde{\Omega}_{m,M}(\Gamma_1(N))$. We split 
\[
f=f_E+f_S
\]
where $f_E \in \mathcal{E}(\Gamma_1(N))$ and $f_{S}\in \mathcal{S}(\Gamma_1(N))$. As in the proof of Theorem \ref{thm:KKL} (see \cite{KKL1}), we may isolate the real and imaginary parts of $f_E, f_S$ and deal with them separately. For brevity, we shall suppose that the Fourier coefficients of $f_E$ and $f_S$ are real valued and move forward.
For any $n\in\mathcal{V}_{m,M}$, Lemma \ref{lem:EisensteinArithmetic} gives us $\varepsilon_{n,N}$ for which 
\[
\sgn\left(c_{f_E}(p)\right)=\varepsilon_{n,N}
\]
for sufficiently large $p\equiv n\pmod{N}$ (and $c_{f_E}(p)=0$ for \underline{all} such $p$ if $\varepsilon_{n,N}=0$). Since $(N\Z+n)\cap (M\Z+m)$ is non-trivial by assumption, there exist infinitely many $p$ in this intersection (since $(N\Z+n)\cap (M\Z+m)$ defines an arithmetic progression and since $(n,N)=(m,M) = 1$), and for such sufficiently large $p$ we have $c_{f_S}(p)=-c_{f_E}(p)$, implying that 
\[
\sgn\left(c_{f_S}(p)\right)=-\sgn\left(c_{f_E}(p)\right)=-\varepsilon_{n,N}.
\]
However, by Corollary~\ref{lem:signchangesarithmetic}, $\{c_{f_S}(p)\}_{p\in (N\Z+n)\cap (M\Z+m)}$ has infinitely many sign changes unless $f_S|S_{M,m}|S_{N,n}=0$.  Therefore, 
\[
\sum_{n\in\mathcal{V}_{m,M}} f_S|S_{M,m}|S_{N,n}=0.
\]


But then 
\[
\sum_{n\in \mathcal{V}_{m,M}}f|S_{M,m}|S_{N,n}=\sum_{n\in\mathcal{V}_{m,M}}f_E|S_{M,m}|S_{N,n}.
\]
(1) Since the vanishing of the coefficients of $f_E|S_{M,m}$ in arithmetic progressions are determined by the vanishing of the $\varepsilon_{n,N}$, and these only depend on $n\pmod{N}$ (or, equivalently, by the polynomials from the proof of Lemma \ref{lem:EisensteinArithmetic} vanishing identically), we see that the $p$-th coefficient of $f_E|S_{M,m}$ vanishes at every $p\equiv m\pmod{M}$ if and only if the $p$-th coefficient of $f_E$ vanishes at every $p\equiv n\pmod{N}$ for every $n\in\mathcal{V}_{m,M}$. This establishes the first statement in (1). Finally, to establish \eqref{eqn:strongInclusion}, suppose that $f\in\widetilde{\Omega}_{m,M}(\Gamma_1(N))$. We use the fact that the cuspidal part is annihilated by $S_{M,m}|S_{N,n}$ (from the first statement in part (1)) to compute 
\[
f|S_{M,m}|S_{N,n} = f_E|S_{M,m}|S_{N,n}.
\]
Since $f_E\in \widetilde{\Omega}_{n,N}(\Gamma_1(N))$ (also by the first statement in part (1)), we conclude that 
\[
f|S_{M,m}|S_{N,n} = f_E|S_{M,m}|S_{N,n}\in \left(\mathcal{E}(\Gamma_1(N))\cap\widetilde{\Omega}_{N,n}(\Gamma_1(N))\right)\Big|S_{M,m}\Big|S_{N,n}.
\]
(2) The first inclusion is immediate from \eqref{eqn:strongInclusion}. To see the second, suppose that $E_n\in \mathcal{E}(\Gamma_1(N))\cap\widetilde{\Omega}_{n,N}(\Gamma_1(N))$ is arbitrary. Then  
\[
E=\sum_{n\in\mathcal{V}_{m,M}} E_n|S_{N,n}
\]
satisfies $a_E(p)=0$ for every $p\equiv m\pmod{M}$ with $\gcd(p,N)=1$. Since $E_n|S_{N,n}$ has level dividing $4N^2$ (arguing as in \cite[Lemma 2.2 (2)]{Vanishing}), the claim follows.
\noindent

\noindent
(3) Since $\Omega_{m,M}(\Gamma_1(N))\subseteq \widetilde{\Omega}_{m,M}(\Gamma_1(N))$, this follows immediately from the first inclusion in part (2), with the reverse inclusion being trivial due to the intersection with $\Omega_{m,M}(\Gamma_1(N))$ on the outside.\qedhere
\end{proof}

\section{Spanning sets for \texorpdfstring{$\widetilde{\Omega}_{n,N}(\Gamma_1(N))|S_{N,n}$}{Tilde-Omega n,N(N) sieved}}\label{sec:Basis}

In this section, we prove Theorem \ref{thm:Basis}. We fix $N\in\N$ and $n\in\Z$ relatively prime to $N$. We first compute the $p$-th coefficient of $H_{k,\ell,\chi,n}$ for $p\equiv n\pmod{N}$.
\begin{lem}\label{lem:Hsingle}
Suppose that $K,N\in\N$, $n\in\Z$ with $\gcd(n,N)=1$, $k,\ell\in\N$ with $k+\ell=K$, and $\chi$ is a primitive character modulo a divisor of $N$. Then for $p\equiv n\pmod{N}$, we have 
\[
\frac{1}{2}c_{H_{k,\ell,\chi,n}}(p)=
\begin{cases}
 p^{K-2}-1   &\text{if $K$ is even},\\
p^{K-2}-p    &\text{if $K$ is odd}.
\end{cases}
\]
\end{lem}
\begin{proof}
For $K>2$ even, we compute
\[
\frac{1}{2}c_{H_{k,\ell,\chi,n}}(p)=\overline{\chi(n)} \left(p^{\ell-1}(\chi(n) p^{k-1} + 1)\right) - (\overline{\chi(n)}p^{\ell-1} + 1) =p^{K-2}-1.
\]
For $k=\ell=1$, we have 
\begin{equation}\label{eqn:weight1}
\frac{1}{2}c_{E_{1,1,\chi}}(p)=1+\chi(p)=1+\chi(n).
\end{equation}
Hence for $\ell=1$ and $k=1$, if $\chi(n)=-1$ then we have $c_{H_{1,1,\chi,n}}(p)=c_{E_{1,1,\chi}}(p)=0$. If $\chi(n)\neq -1$ and $\chi\neq \varphi_n$, then 
\[
\frac{1}{2}c_{H_{1,1,\chi,n}}(p)=(1+\chi(n)) -\frac{1+\chi(n)}{1+\varphi_n(n))}(1+\varphi_n(n))=0.
\]
This completes all of the $K$ even cases.

For $\ell\geq 2$ and $K\geq 3$ odd, we have $H_{k,\ell,\chi,n}=D(H_{k,\ell-1,\chi,n})$, and differentiation multiplies the $p$-th coefficient by $p$, so we have $\frac{1}{2}c_{H_{k,\ell,\chi,n}}(p)=p(p^{K-3}-1)=p^{K-2}-p$.

For $\ell=1$ and $K>3$ odd, we have
\[
\frac{1}{2}c_{H_{K-1,1,\chi,n}}(p)=\overline{\chi(n)} \left(\chi(n) p^{k-1} + 1\right) - \overline{\chi(n)}(\chi(n)p + 1) =p^{K-2}-p.
\]
For $\ell=1$, $k=2$, and $\chi(n)=1$, $\chi\neq 1$, we have 
\[
\frac{1}{2}c_{H_{2,1,\chi,n}}(p)=(p+1)-(p+1)=0.
\]
For $\ell=1$, $k=2$, and $\chi(n)\neq 1$, we use \eqref{eqn:weight1} to compute
\[
\frac{1}{2}c_{H_{2,1,\chi,n}}(p)=\overline{\chi(n)} \left(\chi(n) p + 1\right) - (p+1)-\frac{\overline{\chi(n)}-1}{1+\varphi_n(n)} (1+\varphi_n(n))=0.
\]
\end{proof}
As an immediate corollary to Lemma \ref{lem:Hsingle}, we see that the $p$-th coefficient of $\mathscr{H}_{K,\vec{v},n}\in\widetilde{\Omega}_{n,N}(\Gamma_1(N))$ vanishes in the arithmetic progression.
\begin{lem}\label{lem:Hdetect}
Suppose that $K\geq 2$. 

\begin{enumerate}
    \item If $k_1,k_2,\ell_1,\ell_2\geq 1$ with $k_1+\ell_1=k_2+\ell_2=K\geq 4$, then 
\[
\mathscr{H}_{K,\left(\begin{smallmatrix}k_1,\ell_1,\chi_1\\ k_2,\ell_2,\chi_2\end{smallmatrix}\right),n}\in \widetilde{\Omega}_{n,N}(\Gamma_1(N)).
\]
\item If $\ell_1+k_1=K\leq 3$, then 
\[
\mathscr{H}_{K,\left(\begin{smallmatrix}k_1,\ell_1,\chi_1\end{smallmatrix}\right),n}\in \widetilde{\Omega}_{n,N}(\Gamma_1(N)).
\]
\end{enumerate}
\end{lem}

We next show a relation between different $D^{\ell}E_{k,\chi,\psi}|S_{N,n}$.
\begin{lem}\label{lem:Echipsi}
\noindent

\noindent
\begin{enumerate}
    \item For $n\in A_N$, $\ell\geq 0$, and $k\in\N$, we have 
\[
D^{\ell}E_{k,\chi,\psi}|S_{N,n}=\chi(n)D^{\ell}E_{k,1,\overline{\chi}\psi}.
\]
\item 
For any $\chi,\psi$ with $\chi\overline{\psi}=\varphi$ and $k+\ell\geq 4$, we have 
    \begin{equation}\label{eqn:Hrewrite}
    H_{k, \ell, \varphi,n}|S_{N,n} =
    \begin{cases}
    \left(\overline{\chi(n)}D^{\ell-1}E_{k, \psi, \chi} - \psi(n) E_{\ell, \overline{\psi}, \overline{\chi}}\right)\Big|S_{N,n}&\text{if }k\equiv \ell\pmod{2},\\
      \left(\overline{\chi(n)}D^{\ell-1}E_{k, \psi, \chi} - \psi(n) D(E_{\ell-1, \overline{\psi}, \overline{\chi}})\right)\Big|S_{N,n}& \text{if }2\leq \ell\not\equiv k\pmod{2},\\
\left(\overline{\chi(n)}E_{k,\psi,\chi}-\overline{\chi(n)}E_{2,\psi,\chi}\right)\Big|S_{N,n}&\text{if $k>2$ even, }\ell=1.
\end{cases}
\end{equation}
\end{enumerate}
\end{lem}
\begin{rem}
Although we assume throughout that the characters $\chi$ and $\psi$ are primitive in the Eisenstein series $E_{k,\chi,\psi}$, we abuse notation here and throughout to write $E_{k,1,\overline{\chi}\psi}$ for $E_{k,1,\varphi}$ if $\overline{\chi}\psi$ is induced from the primitive character $\varphi$.
\end{rem}
\begin{proof}
(1) Since $\gcd(n,N)=1$ and $\chi$ is a character modulo a divisor of $N$, for any $r\in\N$ with $r\equiv n\pmod{N}$ we have 
\[
\sigma_{k-1}^{\chi,\psi}(r) = \sum_{d\mid r} \chi\left(\frac{r}{d}\right)\psi(d) d^{k-1} = \chi(n)\sum_{d\mid r} \left(\overline{\chi}\psi\right)(d) d^{k-1}=\chi(n)\sigma_{k-1}^{1,\overline{\chi}\psi}(r).
\]
(2) For $k+\ell\geq 4$ even, we use part (1) to compute 
\begin{multline*}
\overline{\chi(n)}D^{\ell-1}E_{k, \psi, \chi}|S_{N,n} - \psi(n) E_{\ell, \overline{\psi}, \overline{\chi}}|S_{N,n}=\overline{\chi(n)}\psi(n)D^{\ell-1}E_{k,1,\chi\overline{\psi}}|S_{N,n}-E_{\ell,1,\overline{\chi}\psi}|S_{N,n}\\
=\overline{\varphi(n)}D^{\ell-1}E_{k,1,\varphi}|S_{N,n}-E_{\ell,1,\varphi}|S_{N,n}=H_{k,\ell,\varphi,n}|S_{N,n},
\end{multline*}
which establishes \eqref{eqn:Hrewrite} for $k\equiv \ell\pmod{2}$. For $k\not\equiv \ell\pmod{2}$ and $\ell\geq 2$, we simply note that $H_{k,\ell,\varphi,n}=D(H_{k,\ell-1,\varphi,n})$ by definition. To establish the remaining case in \eqref{eqn:Hrewrite}, for $k'\in\{1,k\}$ we use part (1) to rewrite 
\[
\overline{\chi(n)}E_{k',\psi,\chi}=\overline{\chi(n)\overline{\psi(n)}}E_{k',1,\chi\overline{\psi}}.
\]
\end{proof}

Before proving Theorem \ref{thm:Basis}, we determine a spanning set of the linear combinations of $D^{\ell}E_{k,\chi,\psi}$ with small $k$ and $\ell$.
\begin{prop}\label{prop:SmallK}
Let $N\in\N$ and $n\in \Z$ with $\gcd(N,n)=1$ be given. Then 
\[
\bigg(\bigoplus_{\substack{1\leq k\leq 2\\ 0\leq \ell\leq 1\\ 
\ell+k\leq 2}} D^{\ell}E_{k,\chi,\psi}\bigg)\cap \widetilde{\Omega}_{n,N}(\Gamma_1(N))|S_{N,n}
\]
is spanned by 
\[
\left\{\mathscr{H}_{K,(k,K-k,\chi),n}|S_{N,n}:2\leq K\leq 3\right\}.
\]
\end{prop}
\begin{proof}
Suppose that $f=f_1+f_2$ is in the set, with $f_j$ having $\ell+k=j$. Then, by Lemma \ref{lem:Echipsi} (1), we can write 
\begin{align*}
f_1|S_{N,n}&=\sum_{\chi\text{  odd}} c_{\chi,1}E_{1,1,\chi}S|_{N,n},\\
f_2|S_{N,n}&=\sum_{\chi\text{ even}} c_{\chi,2}E_{2,1,\psi}|S_{N,n}+\sum_{\chi\text{  odd}} c_{\chi,2}DE_{1,1,\chi}|S_{N,n}.
\end{align*}
We show the claim by induction on the number of  pairs of $(\chi,j)$, with $c_{\chi,j}\neq 0$.

If no such pairs exist, then $f|S_{N,n}=0$ and we are done. If there exists an odd character $\chi$
with $\chi(n)=-1$ for which $c_{\chi,j}\neq 0$, then Lemma \ref{lem:Hsingle} implies that the term
\[
c_{\chi,j}D^{j-1}E_{1,1,\chi}=c_{\chi,j}H_{1,j+1,\chi,n}=c_{\chi,j}\mathscr{H}_{j+1,(1,j,\chi),n}
\]
can be removed, and the claim then follows by induction. So we can assume that $c_{\chi,j}=0$ if $\chi(n)=-1$. If there exists an odd $\chi$ with $\chi(n)\neq -1$ and $c_{\chi,j}\neq 0$, then writing
\begin{align*}
c_{\chi,j}D^{j-1}E_{1,1,\chi}&=c_{\chi,j}H_{1,j,\chi,n}+c_{\chi,j}\frac{1+\chi(n)}{1+\varphi_n(n)}D^{j-1}E_{1,1,\varphi_n}\\
&=\mathscr{H}_{j+1,(1,j,\chi),n}+c_{\chi,j}\frac{1+\chi(n)}{1+\varphi_n(n)}D^{j-1}E_{1,1,\varphi_n}
\end{align*}
allows us to replace $\chi$ with $\varphi_n$. if $c_{\varphi_n,j}\neq 0$ initially, then we have reduced the number of non-zero terms and we are done. We can therefore assume that $c_{\chi,j}=0$ for all odd $\chi\neq \varphi_n$. 

Similarly, if $\chi$ is an even character, then we can replace $E_{2,1,\chi}$ with $E_{2,1,1}$ (and possibly adding a multiple of $E_{1,1,\varphi_n}$) by rewriting $\mathscr{H}_{3,(2,1,\chi),n}=H_{2,1,\chi,n}$ in terms of $E_{2,1,1}$ and $E_{1,1,\varphi_n}$. 

We can therefore assume that 
\[
f=c_1 E_{1,1,\varphi_n} + c_2 E_{2,1,1} + c_3 E_{1,1,\varphi_n}.
\]
Then \eqref{eqn:weight1} implies that, for all $p\equiv n\pmod{N}$, we have 
\[
0=c_f(p)=c_1(1+\varphi_n(m))+c_2(p+1) + c_3p(1+\varphi_n)
\]
If $\varphi_n(n)=-1$, then this can only occur for $c_2=0$, so we may assume that $\varphi_n(n)\neq -1$. Taking $p\to\infty$, we conclude that $c_3=-\frac{c_2}{1+\varphi_n}$ and then we compare the constant term to solve for $c_1=-\frac{c_2}{1+\varphi_n(n)}$. So $f$ is a multiple of $\mathscr{H}_{3,(2,1,1),n}$ and the proof is complete.
\end{proof}

Now we move to the proof of Theorem \ref{thm:Basis}. 
\begin{proof}[Proof of Theorem \ref{thm:Basis}]
Suppose that $f\in\widetilde{\Omega}_{n,N}(\Gamma_1(N))$. From Theorem \ref{thm:VanishPrimesArithmetic}, $f|S_{N,n}$ is spanned by elements of $\mathcal{E}(\Gamma_1(N))|S_{N,n}$. From \cite[Proposition 20]{Zagier}, $f|S_{N,n}$ is a linear combination of the derivatives of the level $N$ Eisenstein series and $E_2$ (in Zagier's notation, $\phi=E_2$). Bases for Eisenstein series of different weights are given in \cite[Theorems 4.5.2, 4.6.2, 4.8.1]{DiamondShurman}. Note that an Eisenstein oldform $E_{k,\chi,\psi}(t\tau)$ with $1<t|N$ is annihilated by $S_{N,n}$ for $n\in \mathcal{V}_{m,M}$ (as $(n,N)=1$).   
Suppose that 
\[
f|S_{N,n} = \sum_{i=1}^{t} \alpha_i D^{\ell_i} E_{k_i, \chi_i, \psi_i}|S_{N,n}
\]
for some primitive Dirichlet characters $\chi_i, \psi_i$ modulo a divisor of $N$ that satisfy $\chi_i\psi_i(-1)=(-1)^{k_i}$, where we recall that, for ease of notation, we write $-\frac{1}{12}E_2=E_{2,1,1}$. Due to the symmetry for $k=1$ (since $\chi_i$ and $\psi_i$ is taken as an unordered pair in this case) we may assume without loss of generality that if $\alpha_i D^{\ell_i}  E_{1,\chi_i,\psi_i}$ occurs in the sum, then another term is of the form $\alpha_i D^{\ell_i}E_{1,\psi_i,\chi_i}$. For ease of notation, we shall simply denote $E_{k_i, \chi_i, \psi_i}|S_{N,n}$ by $F_i$.


With $f$ defined as above, let $K_f$ denote the largest value of $k_{i} + \ell_{i}$. We prove the claim by induction on $K_f$. If $K_f\leq 2$, then the claim follows by Proposition \ref{prop:SmallK}, giving the base cases, so we may assume that $K_{f}\geq 3$. 

Suppose without loss of generality that $i=1,\ldots, r$ are the indices for which $k_i+\ell_i=K_f$ and $1\leq i\leq r_1$ have $\ell_i\geq 1$, while $r_1+1\leq i\leq r$ are the indices with $k_i=K_f$ (and $\ell_i=0$). Then 
\[
f_K:=\sum_{i=r_1+1}^{r} \alpha_iF_i
\]
is a pure weight $k$ form without any derivatives. We write
\[
f|S_{N,n} = \sum_{i=1}^{r_1} \alpha_i D^{\ell_i} F_{i} + f_K|S_{N,n}+ g|S_{N,n}
\]
for some quasi-modular form $g$. If we define $K_g$ analogous to $K_f$ above, then we observe that $K_g < K_f$. Let us denote $K_{f}$ as $K$ for simplicity. For a prime $p\equiv n\pmod{N}$, we have 
\begin{multline}\label{eqn:af(p)}
    c_f(p) = 2\sum_{i=1}^{r} \alpha_i p^{\ell_i} \left( \chi_i(p) p^{k_i-1} + \psi_i(p)\right) + c_g(p)\\
    = 2p^{K-1} \sum_{i=1}^{r}\alpha_i\chi_i(n) + \sum_{i=1}^{r} \alpha_i\psi_{i}(n) p^{\ell_i} + c_g(p).
\end{multline}
In particular we have 
\[
c_f(p) = 2p^{K-1} \sum_{i=1}^{r} \alpha_i\left(\chi_i(n)+\delta_{k_i=1} \psi_i(n)\right) + O\left(p^{K-2}\right).
\]
Since $c_f(p)=0$, taking $p\to\infty$ forces
\[
\sum_{i=1}^{r} \alpha_i\left(\chi_i(n) +\delta_{k_i=1}\psi_i(n)\right)= 0.
\]
Letting $\alpha_i'=\alpha_i$ if $k_i\neq 1$ and $\alpha_i'=\frac{1}{2}\alpha_i$ if $k_i=1$, by the assumption above that each unordered pair appears twice for $k_i=1$, we have 
\[
\sum_{i=1}^{r} \alpha_i'\chi_i(n)= 0.
\]
Let $\{e_i\}$ denote the standard basis of $\mathbb{C}^r$. The orthogonal complement of \[
\left(\chi_1(n),\chi_2(n),\ldots, \chi_r(n)\right)
\]
in $\mathbb{C}^r$ is spanned by $\{v_j := \overline{\chi_1(n)} e_1 - \overline{\chi_j(n)} e_j\}$ for $2\leqslant j \leqslant r$. Hence there exist constants $\beta_j$'s such that $(\alpha_1',\ldots, \alpha_r') = \beta_2 v_2 + \ldots + \beta_rv_r$. Without loss of generality, we assume that $k_1=1$ if $k_i=1$ for some $1\leq i\leq r_1$.

Therefore, on rewriting the above equation, we have 
\[
f = \sum_{j=2}^{r} \beta_j \left(\overline{\chi_1(n)}D^{\ell_1}F_1 - \overline{\chi_j(n)} D^{\ell_j} F_j \right) + g.
\] 
For $K\geq 3$ odd, we use \eqref{eqn:Hrewrite} to define a quasi-modular form $h$ such that 
\begin{align*}
    f|S_{N,n} &= \sum_{j=2}^{r} \beta_j \left(\overline{\chi_1(n)}D^{\ell_1}F_1 - \psi_1(n) E_{\ell_1+1, \overline{\chi_1}, \overline{\psi_1}} + \psi_j(n) E_{\ell_j+1, \overline{\chi_j}, \overline{\psi_j}} - \overline{\chi_j(n)} D^{\ell_j} F_j \right) + h|S_{N,n}\\
    &=\sum_{j=2}^{r} \beta_j\mathscr{H}_{K+1,\left(\begin{smallmatrix}k_1,\ell_1+1,\chi_1\overline{\psi_1}\\ k_j,\ell_j+1,\chi_j\overline{\psi_j}\end{smallmatrix}\right),n} \Big|S_{N,n} + h|S_{N,n}.
\end{align*}
By Lemma \ref{lem:Hdetect}, the first sum is contained in $\widetilde{\Omega}_{n,N}(\Gamma_1(N))$ and hence $h\in \widetilde{\Omega}_{n,N}(\Gamma_1(N))$. Moreover, we have 
\[
h=\sum_{\substack{2\leq j\leq r\\ k_j=1}}\beta_j\left(\psi_j(n)E_{\ell_j+1,\overline{\chi_j},\overline{\psi_j}}-\psi_{1}(n)E_{\ell_1+1,\overline{\chi_1},\overline{\psi_1}}\right)\Big|S_{N,n} +h' 
\]
with $K_{h'} < K_f$.  The sum 
\[
\mathcal{H}:=\sum_{\substack{2\leq j\leq r\\ k_j=1}}\beta_j\left(\psi_j(n)E_{\ell_j+1,\overline{\chi_j},\overline{\psi_j}}-\psi_{1}(n)E_{\ell_1+1,\overline{\chi_1},\overline{\psi_1}}\right)
\]
is empty if $k_1\neq 1$, and otherwise $\ell_1=\ell_j=K-1$ for every term in the sum. Therefore either $\mathcal{H}=0$ or (using Lemma \ref{lem:Echipsi} (2))
\begin{align*}
\mathcal{H}|S_{N,n}&=\sum_{\substack{2\leq j\leq r\\ k_j=1}}\beta_j\left(\overline{\chi_j(n)}\psi_j(n)E_{K,1,\chi_j\overline{\psi_j}}-\overline{\chi_1(n)}\psi_{1}(n)E_{K,1,\chi_1\overline{\psi_1}}\right)\Big|S_{N,n}\\
&=\sum_{\substack{2\leq j\leq r\\ k_j=1}}\beta_j\left(\mathscr{H}_{K+1,\left(\begin{smallmatrix}K,1,\chi_j\overline{\psi_j}\\
K,1,\chi_1,\overline{\psi_1}
\end{smallmatrix}\right),n}+E_{1,1,\chi_j\overline{\psi_j}}-E_{1,1,\chi_1\overline{\psi_1}}\right)\bigg|
S_{N,n}.
\end{align*}
Thus,  using Lemma \ref{lem:Hdetect}, we have 
\[
h=\sum_{\substack{2\leq j\leq r\\ k_j=1}}\beta_j\mathscr{H}_{K+1,\left(\begin{smallmatrix}K,1,\chi_j\overline{\psi_j}\\
K,1,\chi_1,\overline{\psi_1}\end{smallmatrix}\right),n}+h''
\]
for some $h''$ with $K_{h''}<K_f$ and $h''\in \widetilde{\Omega}_{n,N}(\Gamma_1(N))$. The claim follows by induction in this case. 

Now suppose that $K\geq 4$ is even. In this case, we use \eqref{eqn:Hrewrite} to define a quasi-modular form $h$ such that 
\[
    f|S_{N,n} =\sum_{j=2}^{r} \beta_j\mathscr{H}_{K+1,\left(\begin{smallmatrix}k_1,\ell_1+1,\chi_1\overline{\psi_1}\\ k_j,\ell_j+1,\chi_j\overline{\psi_j}\end{smallmatrix}\right),n} \Big|S_{N,n} + h|S_{N,n}.
\]
By Lemma \ref{lem:Hdetect}, the first sum is contained in $\widetilde{\Omega}_{n,N}(\Gamma_1(N))$ and hence $h\in \widetilde{\Omega}_{n,N}(\Gamma_1(N))$. Moreover, we have (note that if $k_j=1$ then $\ell_j=K-1\geq 3$ must be odd)
\[
h=\sum_{\substack{2\leq j\leq r\\ k_j=1}}\beta_j\left(\psi_j(n)DE_{\ell_j,\overline{\chi_j},\overline{\psi_j}}-\psi_{1}(n)DE_{\ell_1,\overline{\chi_1},\overline{\psi_1}}\right)\Big|S_{N,n} +h' 
\]
with $K_{h'} < K_f$.  The sum 
\[
\mathcal{H}:=\sum_{\substack{2\leq j\leq r\\ k_j=1}}\beta_j\left(\psi_j(n)DE_{\ell_j,\overline{\chi_j},\overline{\psi_j}}-\psi_{1}(n)DE_{\ell_1,\overline{\chi_1},\overline{\psi_1}}\right)
\]
is empty if $k_1\neq 1$, and otherwise $\ell_1=\ell_j=K-1$ for every term in the sum. Therefore either $\mathcal{H}=0$ or (using Lemma \ref{lem:Echipsi} (2))
\begin{align*}
\mathcal{H}|S_{N,n}&=\sum_{\substack{2\leq j\leq r\\ k_j=1}}\beta_j\left(\overline{\chi_j(n)}\psi_j(n) DE_{K-1,1,\chi_j\overline{\psi_j}}-\overline{\chi_1(n)}\psi_{1}(n)DE_{K-1,1,\chi_1\overline{\psi_1}}\right)\bigg|S_{N,n}\\
&=\sum_{\substack{2\leq j\leq r\\ k_j=1}}\beta_j\left(\mathscr{H}_{K+1,\left(\begin{smallmatrix}K-1,2,\chi_j\overline{\psi_j}\\
K-1,2,\chi_1,\overline{\psi_1}
\end{smallmatrix}\right),n}+DE_{1,1,\chi_j\overline{\psi_j}}-DE_{1,1,\chi_1\overline{\psi_1}}\right)\bigg|
S_{N,n}.
\end{align*}
Thus,  using Lemma \ref{lem:Hdetect}, we have 
\[
h=\sum_{\substack{2\leq j\leq r\\ k_j=1}}\beta_j\mathscr{H}_{K+1,\left(\begin{smallmatrix}K-1,2,\chi_j\overline{\psi_j}\\
K-1,2,\chi_1,\overline{\psi_1}\end{smallmatrix}\right),n}+h''
\]
for some $h''$ with $K_{h''}<K_f$ and $h''\in \widetilde{\Omega}_{n,N}(\Gamma_1(N))$. The claim again follows by induction in this case. 
\end{proof}

    \section{Finite checks for prime detection}\label{sec:FiniteCheckAP}
We first prove that one only needs to check finitely-many primes in arithmetic progressions to verify that a form is prime-detecting in that arithmetic progression.
 \begin{proof}[Proof of Theorem \ref{thm:FiniteCheckAP}]   The proof is a ready adaptation of the proof of \cite[Theorem 1.3]{KKL1} Suppose that 
    \[
    f = \sum_{\ell,k} \alpha_{\ell,k} D^{\ell} E_{k,\chi,\psi}|V_{\delta}.
    \]
    Choose $R$ to be the maximum of $\ell + k-1$ for which $\alpha_{\ell,k}\neq 0$. 
Since the cusidal part of $f$ vanishes by assumption, Theorem \ref{thm:VanishPrimesArithmetic} (1) implies that $f\in\widetilde{\Omega}_{m,M}(\Gamma_1(N))$, if and only if $f\in\widetilde{\Omega}_{n,N}(\Gamma_1(N))$ for then every $n\in\mathcal{V}_{m,M}$. From the proof of Lemma \ref{lem:EisensteinArithmetic}, we may write 
    \[
    c_f(p)=\sum_{r=0}^{R} \beta_r(\chi(n),\psi(n)) p^r,
    \]
    whenever $p \equiv n\mod N$ is a prime. Now, if $c_f(p_i) = 0$ for $1\leqslant i\leqslant R+1$, then we obtain the system of equations
    \[
        \begin{pmatrix}
            1&p_1&p_1^2&\ldots&p_1^R\\
            1&p_2&p_2^2&\ldots&p_2^R\\
            \vdots&\vdots&\vdots&\ddots&\vdots\\
            1&p_{R+1}&p_{R+1}^2&\ldots&p_{R+1}^R
        \end{pmatrix}
        \begin{pmatrix}
            \beta_0(\chi(n),\psi(n)\\ \beta_1(\chi(n),\psi(n))\\ \vdots \\ \beta_R(\chi(n),\psi(n))
        \end{pmatrix} = 0.
    \]
    This system, being a Vandermonde system, is uniquely solvable and hence $\beta_0(\chi(n),\psi(n)) = \beta_1(\chi(n),\psi(n)) = \ldots = \beta_R(\chi(n),\psi(n)) = 0$. Repeating this for $R+1$ primes in each of the arithmetic progressions $n$ modulo $N$ for $n\in\mathcal{V}_{m,M}$ yields the claim.
\end{proof}
Before proving Corollary \ref{cor:N=1}, we require the following lemma.
\begin{lem}\label{lem:SieveVanish}
If $g\in \mathcal{S}$ is a level one quasi-modular cusp form and $g|S_{M,m}=0$ for some $M\in\N$ and $m\in\Z$, then $g=0$.
\end{lem}
\begin{proof}
Let $g$ be a quasi-modular cusp form of level $1$ and mixed weight $\leq 2k$. By \cite[(3.1)]{BvIMOS}, we see that,
$$
g\in \bigoplus_{j=1}^{k}\bigoplus_{{\rm r}=0}^{j-1} D^rS_{2j-2{\rm r}}
$$
where $S_{2k}$ is the space of cusp forms of level 1 and weight $2k$, and $D:=q\frac{d}{dq}$ is the differential operator. As in the proof of Theorem~1.1 in \cite{BvIMOS}, we can then decompose $g$ into a sum  of (derivatives of) normalized cuspidal Hecke eigenforms of various weights, and construct a compositum number field $K$ from the Fourier coefficients of these eigenforms. Consequently, 
we can assume 
\begin{align}\label{eq1}
g= \sum_{r=2}^{2k} \sum_j h_{r,j}(D)f_{r,j}
\end{align}
where $f_{r,j}$ is a normalized cuspidal Hecke eigenform of level 1 and weight $r$, and  $h_{r,j}\in \mathcal{O}_K[x]$ is a nonzero polynomial of degree at most $k$. Here $\mathcal{O}_K$ denotes the ring of integers of $K$. 

Our task is to argue, as in \cite{BvIMOS}, that all $h_{r,j}$ are zero polynomials if $g|S_{m,M}=0$. From \eqref{eq1}, 
$$
a_g(p) = \sum_{r=2}^k \sum_j h_{r,j}(p) a_{f_{r,j}}(p)
$$
at every prime $p$. Without loss of generality, we assume all $h_{r,j}(m)\neq 0$ (which can be achieved by replacing $m$ with a sufficiently large term in the arithmetic progression $m+M\Z$). Thus all $h_{r,j}(m)^{-1}\in K$. Fix a pair $(r',j')$.

Now we apply \cite[Lemma~2.1]{BvIMOS} as follows: choose a sufficiently large prime $\ell$ that fulfills (B1)-(B4) as required in the lemma, and take $d=M\ell$. The lemma assures that there is a positive portion of primes $p\equiv m$ mod $M\ell$ for which 
$$
a_{f_{r,j}}(p)\equiv h_{r',j'}(m)^{-1}\delta_{(r,j)=(r',j')}\quad  \pmod{\mathfrak{p}_{\ell,K}}, \quad \forall \ (r,j),
$$
where $\delta_{a=b}=1$ if $a=b$ or $0$ otherwise, and $\mathfrak{p}_{\ell,K}\subset \mathcal{O}_K$ is a prime ideal lying above $\ell$. For such primes $p$ (which satisfy $p\equiv m$ mod $M\ell$), we have $h_{r,j}(p)\equiv  h_{r,j}(m)$ mod $\ell$ and 
$$
a_g(p) = \sum_{r=2}^k \sum_j h_{r,j}(p) a_{f_{r,j}}(p)\equiv h_{r',j'}(m)h_{r',j'}(m)^{-1}=1 \quad \pmod{\mathfrak{p}_{\ell,K}}.
$$
However, if $g|S_{m,M}=0$, then $a_g(p)=0$ for all $p\equiv m$ mod $M$. Contradiction arises, so all $h_{r,j}$ are zero polynomials.
\end{proof}
We are now ready to prove Corollary \ref{cor:N=1}.
\begin{proof}[Proof of Corollary \ref{cor:N=1}]
Suppose that $f\in\widetilde{\Omega}_{m,M}(\SL_2(\Z))$. We split $f=E+g$ with $
E\in \mathcal{E}$ and $g\in\mathcal{S}$. Since $\gcd(n,1)=1$ is satisfied for all $n$, Theorem \ref{thm:VanishPrimesArithmetic} implies that $f|S_{M,m}\in \mathcal{E}|S_{M,m}$. In other words, $f|S_{M,m}=E|S_{M,m}$ and $g|S_{M,m}=0$. By Lemma \ref{lem:SieveVanish}, we have $g=0$, and hence $f=E$.

We claim next that $E$ detects all primes. To see this note that, for $N=1$, the Vandermonde system built in the proof of Theorem \ref{thm:FiniteCheckAP} is the same as the Vandermonde system built in the proof of \cite[Theorem 1.3]{KKL1}. Since $E|S_{M,m}$ vanishes at all primes by Theorem \ref{thm:FiniteCheckAP}, it satisfies this Vandermonde system, so \cite[Theorem 1.3]{KKL1} implies that $E$ vanishes at all primes. We conclude that $g=0$ and hence $f=E\in \widetilde{\Omega}$.

Moreover, since $f=E$ and $c_E(p)=0$ for all primes (and hence any primes $p\not\equiv m\pmod{M}$), we see that $f$ cannot strongly detect primes in the arithmetic progression unless $M=1$.
\end{proof}

\appendix

\section{An analytic proof for the level one case}\label{sec:AnalyticProof}

As in \cite{CvIO}, polynomial expressions involving MacMahon partition functions can be rewritten as polynomial equations involving various divisor functions. Thus the study of partition identities that detect primes is in principle a study of ``divisor function'' identities that detect primes. In this spirit we consider the following general situation.

For $1\leqslant i \leqslant r$, we choose polynomials $P_i(x)\in \Q[x]$. We also choose and fix non-negative integers $\{k_i\}_{i=1}^{r}$. We define the function
	\begin{equation}\label{Equation "a(n) definition"}
		a(n) := \sum_{i=1}^{r} P_i(n) \sigma_{k_i}(n) = \sum_{j=1}^{t} A_j n^{\ell_j} \sigma_{k_j}(n),
	\end{equation}
	for some $A_j\in \Q$ and (not necessarily distinct) non-negative integers $\ell_j$. We observe that 
	\begin{equation}\label{Equation "W definition"}
		W(s):= \sum_{j=1}^{t} A_j \zeta(s-\ell_j)\zeta(s-\ell_{j}-k_{j}) = \sum_{n=1}^{\infty} \frac{a(n)}{n^s}.
	\end{equation}
    Associate to $W$, two integers $R_{W}$ and $S_W$ defined by $R_{W} := \max_{j} \{\ell_j, \ell_j + k_j\} = \max_j \{\ell_j+k_j\}$ and $S_{W}:=\sum |A_j|$, where the sum is over all indices $j$ such that $\ell_j + k_j = R_W$.	We also define the closely related function
	\begin{equation}
		Z_{W}(s) := \prod_{j=1}^{t} \left(\zeta(s-\ell_{j}) \zeta(s-\ell_{j}-k_{j}) \right)^{A_j} = \sum_{n=1}^{\infty} \frac{b(n)}{n^s}
	\end{equation}
	for $\Re(s) \gg 1$. In order to state the main theorem, we introduce the following notation. For a quadruple of integers $\textbf{m}=(m_1,m_2,m_3,m_4)\in \Z^4$, we define
	\begin{multline}\label{Equation "W_m definition"}
		W_{\textbf{m}}(s):= \zeta(s-m_1)\zeta(s-m_3) + \zeta(s-m_2)\zeta(s-m_4)\\
		-\zeta(s-m_1)\zeta(s-m_4) - \zeta(s-m_2)\zeta(s-m_3) =: \sum_{n=1}^{\infty} \frac{a_{\textbf{m}}(n)}{n^s}.
	\end{multline}

	\begin{thm}\label{thm:Equivalence}
		Let notation be as above and fix a $W$ as in \eqref{Equation "W definition"}. Then the following are equivalent.
		\begin{enumerate}
			\item	$a(p)$ vanishes for all primes $p$,
			\item 	$Z_{W}(s)\equiv 1$,
			\item 	There exist integers $\{c_{\textbf{m}}\}_{\textbf{m}\in \Z^4}$, at most finitely many of them non-zero, such that $W(s) = \sum_{\textbf{m}\in \Z^{4}} c_{\textbf{m}}W_{\textbf{m}}(s)$,
            \item   There exist $R_W$ distinct primes $\{p_1, \ldots, p_{R_W}\}$ such that $a(p_i)=0$ for $1\leqslant i\leqslant R_W$.
		\end{enumerate}
	\end{thm}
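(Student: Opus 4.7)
The plan is to close the four-way equivalence by a short cycle around condition $(1)$, using the central observation that $a(p)$ coincides, for a prime $p$, with the value at $X=p$ of the polynomial
\[
a(X)=\sum_{j=1}^{t} A_j\bigl(X^{\ell_j}+X^{\ell_j+k_j}\bigr)\in\Q[X],
\]
of degree at most $R_W$. With this in hand, $(3)\Rightarrow(1)$ is the telescoping identity $a_{\mathbf m}(p)=(p^{m_1}+p^{m_3})+(p^{m_2}+p^{m_4})-(p^{m_1}+p^{m_4})-(p^{m_2}+p^{m_3})=0$ extended by $\Q$-linearity; $(1)\Rightarrow(4)$ is trivial; and $(4)\Rightarrow(1)$ is a Vandermonde argument, since a polynomial of degree at most $R_W$ vanishing at sufficiently many distinct primes must vanish identically, forcing $a(p)=0$ at every prime.

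For $(1)\Leftrightarrow(2)$, I would pass to logarithms. The identity
\[
\log Z_W(s)=\sum_{j=1}^{t} A_j\bigl(\log\zeta(s-\ell_j)+\log\zeta(s-\ell_j-k_j)\bigr)
\]
together with the expansion $\log\zeta(s-m)=\sum_{p}\sum_{k\geq 1}\frac{p^{km}}{k\,p^{ks}}$ shows that the Dirichlet coefficient of $\log Z_W(s)$ at the prime power $p^k$ is $\frac{1}{k}\sum_j A_j(p^{k\ell_j}+p^{k(\ell_j+k_j)})$, i.e.\ $\frac{1}{k}$ times the evaluation of $a(X)$ at $X=p^k$. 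Consequently, $a(p)=0$ at every prime forces $a(X)\equiv 0$ as a polynomial, hence every prime-power Dirichlet coefficient of $\log Z_W$ vanishes and $Z_W\equiv 1$; the reverse implication reads off the coefficient at $p^1$.

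The main implication is $(1)\Rightarrow(3)$, where I expect the bulk of the work. Assuming $a(X)\equiv 0$, I would view each summand $A_j\zeta(s-\ell_j)\zeta(s-\ell_j-k_j)$ as a weighted edge with endpoints $\ell_j$ and $\ell_j+k_j$ in a graph on $\Z_{\geq 0}$ (with loops permitted when $k_j=0$), and interpret the vanishing of $a(X)$ as the condition that the weighted vertex degree is zero at every vertex. Each $W_{\mathbf m}$ then corresponds to the elementary ``endpoint-swap'' relation $\{m_1,m_3\}+\{m_2,m_4\}-\{m_1,m_4\}-\{m_2,m_3\}$, which visibly lies in the kernel of this degree map. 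The remaining task is the purely combinatorial statement that the $W_{\mathbf m}$ \emph{generate} this kernel; I would prove this by induction on the number of distinct edges in the support of $W$, using at each step a carefully chosen $W_{\mathbf m}$ at a boundary vertex to cancel one edge. The main obstacle I anticipate is this last step, particularly the handling of loops (which contribute weight $2$ to a single vertex and require the degenerate relation $\{a,a\}+\{d,d\}-2\{a,d\}$, arising from $\mathbf m=(a,d,a,d)$, to be traded for edges). A supporting fact I would invoke is the $\Q$-linear independence of the products $\{\zeta(s-a)\zeta(s-b)\}$ over distinct multisets $\{a,b\}$, so that identities among Dirichlet series translate unambiguously into identities in the free $\Q$-module on pairs.
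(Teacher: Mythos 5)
Your proposal is correct, and three of the four links coincide with the paper's: the telescoping computation for $(3)\Rightarrow(1)$, the Vandermonde argument for $(4)\Leftrightarrow(1)$ (which the paper simply outsources to \cite[Theorem 1.3]{KKL1}), and the prime-coefficient comparison for $(1)\Leftrightarrow(2)$ — the paper reads off $b(p)=\sum_j B_j p^{m_j}$ from the Euler product of $Z_W=\prod_j\zeta(s-m_j)^{B_j}$ rather than from $\log Z_W$, but this is the same computation, since your polynomial $a(X)$ vanishing identically is precisely the statement that all the grouped exponents $B_j$ vanish. The genuine divergence is in the main implication. The paper proves $(2)\Rightarrow(3)$ by an explicit double descent: it locates a term with $\ell_t+k_t=R_W$ and positive coefficient, pairs it with one of opposite sign at the same top exponent, subtracts a single $W_{\mathbf{m}}$, and tracks the lexicographic decrease of $(R_W,S_W)$ — including a somewhat delicate footnote explaining why $S_W$ may temporarily grow when $R_W$ drops. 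You instead isolate a clean structural lemma: the map sending the edge $\{a,b\}$ to $X^a+X^b$ has kernel generated by the endpoint-swap relations $\{m_1,m_3\}+\{m_2,m_4\}-\{m_1,m_4\}-\{m_2,m_3\}$. This lemma is true (modulo the swaps every edge $\{a,b\}$ is congruent to $\{a,0\}+\{0,b\}-\{0,0\}$ for a fixed base vertex $0$, and the star edges inject under the degree map, so a dimension count finishes it), your degenerate relation $\mathbf{m}=(a,d,a,d)$ handles loops exactly as needed, and the linear independence of the products $\zeta(s-a)\zeta(s-b)$ over distinct multisets — which the paper uses implicitly — justifies the translation to the free module. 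What your route buys is a termination-free argument that replaces the paper's bookkeeping on $S_W$; what the paper's route buys is an explicit algorithm producing the $c_{\mathbf{m}}$. Two small points to tighten: in $(4)\Rightarrow(1)$ you should commit to the exact count of primes needed (a polynomial of degree at most $R_W$ a priori needs $R_W+1$ vanishing points, so the cardinality in the statement deserves a word), and since the $A_j$ are only rational, your $\Q$-linear argument yields rational $c_{\mathbf{m}}$ rather than the integers asserted in $(3)$ — though the paper's own proof has the same gap, resolved only by its earlier ``clear denominators'' reduction.
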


\begin{proof}

%

    The equivalence of (1) and (4) is the content of \cite[Theorem 1.3]{KKL1}. So we shall prove that (1), (2) and (3) are equivalent to one another.
    
		Let us prove that (1) implies (2). Clearing out denominators in the definition of $a(n)$ and by taking a suitable power of $Z_{W}(s)$ we may without loss of generality assume that all the $A_j$'s are integers.
        After some rearrangement, we may write
		\[
		Z_{W}(s) = \prod_{j=1}^{u} \zeta(s-m_j)^{B_j}
		\]
		where, $m_1 < m_2 < \ldots < m_u$. Let $M:=\{m_j: 1\leqslant j \leqslant u\}$. To prove (2), it suffices to show that $B_j=0$ for all $j=1,\ldots,u$. From definition, observe that $b(n)$ is a multiplicative function. In other words, $Z_{W}$ has an Euler product expansion. We have
        \begin{align*}
        \log Z_W(s) 
        & = \sum_p b(p)p^{-s} + \cdots\\
        & = \sum_p \Big(\sum_{m_j\in M} B_j p^{m_j}\Big) p^{-s} +\cdots.  
        \end{align*}
		On the other hand, we observe that $a(p) = b(p)$ for every prime $p$. This implies that $B_j=0$ for every $1\leqslant j\leqslant u$. This proves (2).
		
		Let us assume (2) and prove (3). Henceforth we shall adopt the notation of \eqref{Equation "W definition"}. In that notation, we shall suppose that $A_j\neq 0$ for every $1\leqslant j \leqslant t$. For simplicity, we shall also suppose that $(\ell_{i}, k_i) \neq (\ell_{j}, k_{j})$ if $i\neq j$. We shall proceed by successive reductions, first on $S_W$ and then on $R_W$, and so on. If $W\equiv 0$, then we may choose $c_{\textbf{m}}=0$ for all $\textbf{m}$ and we are done. So we suppose otherwise, that is $W\neq 0$. In particular, since $Z_{W}\equiv 1$ by assumption, we have $S_{W} \geqslant 2$ and, since all $\ell_j$ and $k_j$ are non-negative, $R_W\ge 0$.
		
		Without loss of generality, suppose that $\ell_t + k_t = R_{W}$. Define $m_4 = \ell_{t}+k_{t}$. Suppose also without loss of generality that $A_t > 0$.  Since $Z_{W}(s)\equiv 1$, there exists at least one $j < t$ such that $\ell_j+k_j = \ell_t + k_t$ and $A_j < 0$. Since we have chosen the tuples $(\ell_i, k_i)$'s to be distinct it follows that $\ell_{j}\neq \ell_{t}$ and $k_{j}\neq k_{t}$. Relabeling indices if necessary we may suppose that $k_{t} > k_{j}$. In particular $k_{t}\neq 0$. Choose and fix such a $j$. We set $m_2 = \ell_{j}$ and $m_1 = m_3 = \ell_t$. Define $\tilde{W}(s) := W(s)+W_{\textbf{m}}(s)$. We observe that $Z_{\tilde{W}}(s) = Z_{W}(s) \equiv 1$ by assumption. We crucially observe that $R_{\tilde{W}}\leqslant R_{W}$ and if $R_{\tilde{W}} = R_{W}$, then $S_{\tilde{W}} \leqslant S_{W} - 2 < S_{W}$. Furthermore, $R_{\tilde{W}} < R_{W}$ if $S_{W}=2$. If $\tilde{W}\neq 0$, repeating the above process for $\tilde{W}$, we may ``peel off" one $W_{\textbf{m}}$ at a time from $W$.        
        
        To complete the proof, we need to make sure that this process terminates in finitely many steps. To see this, first observe that $R_{W}$ is non-increasing in this process. Second, at each step, at least one of $R_{W}$ or $S_{W}$ is strictly decreasing. Moreover, whenever $R_{W}$ is non-decreasing, the parameter $S_{W}$ is strictly decreasing, ultimately forcing $R_{W}$ to decrease after finitely many steps. Even though $S_{W}$ grows occasionally\footnote{When $S_{W}=2$, there are exactly two choices for $j$ such that the maximum $R_{W}$ is attained. We cancel them out by adding the corresponding $W_{\textbf{m}}$ and obtain $\tilde{W}$. In this step, $R_{\tilde{W}} < R_{W}$, but $S_{\tilde{W}}$ now counts the sum of coefficients of the pairs $(\ell_{j}, k_{j})$ such that $\ell_{j} + k_{j} = R_{\tilde{W}}$ and not $R_{W}$. Thus $S_{\tilde{W}} > S_{W}$ (and in fact this might be considerably larger).}, it is at most finite, at each step, and hence eventually goes down to zero, which in turn decreases $R_{W}$, keeping the reduction argument going. Continuing this process, we eventually end up with $R_{W}=0$ and $S_{W}= 2$; but then $W = \zeta^2(s) - \zeta^2(s) = 0$ (since $Z_{W}\equiv 1$). Following this procedure, we may write $W$ as a integral linear combination of $W_{\textbf{m}}$'s proving (3).

		Finally let us suppose (3) and prove (1). It suffices to verify that for every $\textbf{m}\in \Z^{4}$, and for every prime $p$, we have $a_{\textbf{m}}(p)=0$. We shall give the proof assuming $m_1 < m_2 < m_3 < m_4$, the other cases being treated similarly. Given $\textbf{m} = (m_1,m_2,m_3,m_4)\in \Z^{4}$, by direct computation, we have that 
		\begin{multline}\label{eqn:FourierCoefficientCalculation}
			a_{\textbf{m}}(p) = p^{m_1} \sigma_{m_3-m_1}(p) + p^{m_2}\sigma_{m_4-m_2}(p)-p^{m_1}\sigma_{m_4-m_1}(p) - p^{m_2}\sigma_{m_3-m_2}(p)\\
			= p^{m_3} +p^{m_1} + p^{m_4} + p^{m_2}-p^{m_4} - p^{m_1} - p^{m_3} - p^{m_2}=0.
		\end{multline}
		This completes the proof of the theorem.
\end{proof}

    \begin{rem}
        It is natural to want to extend this proof to forms of higher level, but this does not seem to be straightforward. When considering Eisenstein series of higher level, the associated Dirichlet series involves products of Dirichlet $L$-functions. More precisely, the analogue of $Z_{W}$ (say $Z_{W,N}$, for level $N$) in this situation is no longer a ratio of shifts of the Riemann zeta function, but of $L$-functions associated to Dirichlet characters. The pole of $\zeta(s)$ at the point $s=1$ was used to pinpoint the rightmost singularity of $\log(Z_{W}(s))$. But, as it is well known that the $L$-function associated to non-principal Dirichlet characters have neither zeros nor poles on the boundary of absolute convergence, we run into trouble when looking for the rightmost singularity of $\log(Z_{W,N}(s))$, unless a principal character appears in the decomposition. Futhermore, GRH predicts that $\log(Z_{W,N}(s))$ should not have any poles in the vertical strip of width $1/2$ to the left of region of convergence if there is no principal character. A workaround to this obstacle seems to require new ideas.
    \end{rem}

Along with the vanishing at the primes, it is interesting to investigate when $a(n)$ is non-negative. For a general $W$ as above, the answer depends on the  of $c_{\textbf{m}}$. For $W_{\textbf{m}}$ however, we have the following precise result.

\begin{lem}
    Let $\textbf{m}=(m_1,m_2,m_3,m_4)$ be given. Then $\sgn(a_{\textbf{m}}(n)) = \sgn(m_2-m_1)\sgn(m_4-m_3)$ for every composite number $n$.
\end{lem}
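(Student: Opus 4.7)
The plan is to factor $W_{\textbf{m}}(s)$ as a product of two differences of shifted zeta functions and then read off the sign of $a_{\textbf{m}}(n)$ from the Dirichlet convolution of the corresponding coefficients. Explicitly, expanding the product one verifies
\[
W_{\textbf{m}}(s)=\bigl(\zeta(s-m_1)-\zeta(s-m_2)\bigr)\bigl(\zeta(s-m_3)-\zeta(s-m_4)\bigr),
\]
which matches the definition of $W_{\textbf{m}}$ in \eqref{Equation "W_m definition"}. Since the $n$-th Dirichlet coefficient of $\zeta(s-a)$ equals $n^{a}$, taking the Dirichlet convolution yields
\[
a_{\textbf{m}}(n)=\sum_{d\mid n}\bigl(d^{m_1}-d^{m_2}\bigr)\bigl((n/d)^{m_3}-(n/d)^{m_4}\bigr).
\]

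Next I would observe that the extreme divisors $d=1$ and $d=n$ contribute $0$ (since $1^{m_i}=1$ for every $i$), so only divisors with $1<d<n$ matter. For such a divisor, $d>1$ and $n/d>1$, so the monotonicity of $x\mapsto x^{m}$ on $(1,\infty)$ gives
\[
\sgn\bigl(d^{m_1}-d^{m_2}\bigr)=-\sgn(m_2-m_1),\qquad \sgn\bigl((n/d)^{m_3}-(n/d)^{m_4}\bigr)=-\sgn(m_4-m_3),
\]
(with the convention that both sides vanish when the corresponding exponents agree). Hence every summand with $1<d<n$ has sign equal to $\sgn(m_2-m_1)\sgn(m_4-m_3)$, and the same sign is therefore shared by the whole sum.

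Finally, since $n$ is composite there exists at least one divisor $d$ with $1<d<n$, so whenever $m_1\neq m_2$ and $m_3\neq m_4$ the sum is a nonempty sum of strictly positive multiples of $\sgn(m_2-m_1)\sgn(m_4-m_3)$, giving the claimed sign; if $m_1=m_2$ or $m_3=m_4$, then $a_{\textbf{m}}(n)=0$, matching $\sgn(m_2-m_1)\sgn(m_4-m_3)=0$. There is no real obstacle here: the only care needed is in the bookkeeping of the edge cases $d\in\{1,n\}$ (which force the restriction to composite $n$) and the degenerate cases where two of the $m_i$ coincide; the key observation is simply the product factorization of $W_{\textbf{m}}(s)$, after which the result is immediate from elementary monotonicity.
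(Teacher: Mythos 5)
Your proof is correct and is essentially the paper's argument: both reduce to the identity $a_{\textbf{m}}(n)=\sum_{d\mid n}\left(d^{m_1}-d^{m_2}\right)\left((n/d)^{m_3}-(n/d)^{m_4}\right)$ (you obtain it by factoring $W_{\textbf{m}}(s)$ as a product of two Dirichlet series, the paper by rearranging the divisor sum directly, which is the same computation), then observe that the $d=1$ and $d=n$ terms vanish and every interior divisor contributes the sign $\sgn(m_2-m_1)\sgn(m_4-m_3)$. The only nit is that the relevant monotonicity is that of $m\mapsto d^{m}$ for fixed $d>1$, not of $x\mapsto x^{m}$, but your stated sign conclusions are the right ones.
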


\begin{proof}
    We need only generalize the calculation in \eqref{eqn:FourierCoefficientCalculation}. Suppose $n$ is a composite number. From definition,
    \begin{align*}
        a_{\textbf{m}}(n)&=n^{m_1} \sigma_{m_3-m_1}(n) + n^{m_2}\sigma_{m_4-m_2}(n)-n^{m_1}\sigma_{m_4-m_1}(n) - n^{m_2}\sigma_{m_3-m_2}(n)\\
        &=\sum_{d|n} (n^{m_1} d^{m_3-m_1} + n^{m_2} d^{m_4-m_2} - n^{m_1} d^{m_4-m_1} - n^{m_2} d^{m_3-m_2})\\
        &=\sum_{d|n} \left(\left(\frac{n}{d}\right)^{m_2} - \left(\frac{n}{d}\right)^{m_1}\right) (d^{m_4}-d^{m_3}).
    \end{align*}
    The terms corresponding to $d=1$ and $d=n$ vanish. The remaining terms all non-zero and have the same sign which is $\sgn(m_2-m_1)\sgn(m_4-m_3)$. The lemma follows from here.
\end{proof}

\bibliographystyle{amsalpha}

\bibliography{Bibliography}

\end{document}